\tikzset{arrow data/.style 2 args={%
      decoration={%
         markings,
         mark=at position #1 with \arrow{#2}},
         postaction=decorate}
      }%
\newcommand{\doublewidetilde}[1]{{%
  \mathpalette\double@widetilde{#1}%
}}
\numberwithin{equation}{section}
\theoremstyle{plain}
\newtheorem{theo}{Theorem}[section]
\newtheorem{lem}[theo]{Lemma}
\newtheorem{prop}[theo]{Proposition}
\newtheorem{cor}[theo]{Corollary}
\theoremstyle{definition}
\newtheorem{rem}[theo]{Remark}
\newtheorem{definition}[theo]{Definition}
\newenvironment{pf}{\noindent{\it Proof.\,}}{\hfill $\square$\par \medskip}
\theoremstyle{plain}
\theoremstyle{definition}
\newcommand{\beq}{\begin{equation}}
\newcommand{\eeq}{\end{equation}}
\renewcommand{\a}{\alpha}
\renewcommand{\d}{\delta}
\newcommand{\f}{\varphi}
\newcommand{\g}{\gamma}
\newcommand{\h}{\eta}
\renewcommand{\l}{\lambda}
\newcommand{\s}{\sigma}
\renewcommand{\t}{\tau}
\newcommand{\bR}{\mathbb{R}}
\newcommand{\bN}{\mathbb{N}}
\newcommand{\bA}{\mathbb{A}}
\newcommand{\bB}{\mathbb{B}}
\newcommand{\bW}{\mathbb{W}}
\newcommand{\bT}{\mathbb{T}}
\newcommand{\cA}{\mathscr{A}}
\newcommand{\cB}{\mathcal B}%{\mathscr{B}}
\newcommand{\cC}{\mathcal{C}}
\newcommand{\cF}{\mathscr{F}}
\newcommand{\cJ}{\mathscr{J}}
\newcommand{\cK}{\mathscr{K}}
\newcommand{\cM}{\mathscr{M}}
\newcommand{\cN}{\mathscr{N}}
\newcommand{\cO}{\mathscr{O}}
\newcommand{\cQ}{\mathscr{Q}}
\newcommand{\cU}{\mathscr{U}}
\newcommand{\cV}{\mathscr{V}}
\newcommand{\p}{\partial}
\renewcommand{\square}{\kern1pt\vbox
{\hrule height 0.6pt\hbox{\vrule width 0.6pt\hskip 3pt
\vbox{\vskip 6pt}\hskip 3pt\vrule width 0.6pt}\hrule height0.6pt}\kern1pt}
\DeclareMathOperator\Id{Id}
\DeclareMathOperator{\Lie}{Lie}
\newcommand{\wt}{\widetilde}
\newcommand{\wh}{\widehat}
\newcommand{\bt}{\begin{theo}\ \ }
\newcommand{\et}{\end{theo}}
\newcommand{\bp}{\begin{prop}\ \ }
\newcommand{\ep}{\end{prop}}
\newcommand{\bc}{\begin{cor}\ \ }
\newcommand{\ec}{\end{cor}}
\newcommand{\bl}{\begin{lem}\ \ }
\newcommand{\el}{\end{lem}}
\newcommand{\bd}{\begin{definition}}
\newcommand{\ed}{\end{definition}}
\newcommand{\be}{\begin{equation}}
\newcommand{\ee}{\end{equation}}
\def\<#1,#2>{\langle\,#1,\,#2\,\rangle}
\newcommand{\arr}{\begin{array}{rlll}}
\newcommand{\ea}{\end{array}}
\newcommand{\bea}{\begin{eqnarray}}
\newcommand{\eea}{\end{eqnarray}}
\newcommand{\bean}{\begin{eqnarray*}}
\newcommand{\eean}{\end{eqnarray*}}
\renewcommand{\=}{:=}
\newcommand{\ve}{\varepsilon}
  \newcommand{\vertiii}[1]{{\left\vert\kern-0.25ex\left\vert\kern-0.25ex\left\vert #1 
    \right\vert\kern-0.25ex\right\vert\kern-0.25ex\right\vert}}
\newcommand{\Attain}{\cM\text{\it -Att}}
\newcommand{\orb}{\operatorname{Orb}}
\newcommand{\Kalman}{\operatorname{Kalman}}
\def\sideremark#1{\ifvmode\leavevmode\fi\vadjust{%            The remark
\vbox to0pt{\hbox to 0pt{\hskip\hsize\hskip1em%               will appear only
\vbox{\hsize3cm\tiny\raggedright\pretolerance10000%          on the side
\noindent #1\hfill}\hss}\vbox to8pt{\vfil}\vss}}}%           in 3cm
\title[Flows of vector fields and  the  Kalman Theorem]{Flows of vector fields and  the  Kalman Theorem}
 \author[Bagagiolo, Giannotti,  Spiro and Zoppello]{Fabio Bagagiolo$^1$ \quad Cristina Giannotti$^*$$^2$\quad  Andrea Spiro$^2$ \quad Marta Zoppello$^3$}
 \address[$^1$]{Dipartimento di Matematica
Universit\`a di Trento
Via Sommarive, 14, 
I-38123 Povo (Trento)
ITALY\ \ \textit{E-mail}: \textnormal{fabio.bagagiolo@unitn.it}}
\address[$^2$]{Scuola di Scienze e Tecnologie
Universit\`a di Camerino
Via Madonna delle Carceri
I-62032 Camerino (Macerata)
ITALY\ \ \textit{E-mail}: \textnormal{cristina.giannotti@unicam.it, 
andrea.spiro@unicam.it}
}
\address[$^3$]{Dipartimento di Scienze Matematiche 
``G. L. Lagrange'' (DISMA)
Politecnico di Torino
Corso Duca degli Abruzzi, 24, 
10129 Torino 
ITALY\ \ \textit{E-mail}: \textnormal{marta.zoppello@polito.it}
}
\address[$^*$]{Corresponding author}
 \subjclass[2020]{34C99; 93B05; 93C05}
 \keywords{Controllability; Flow of vector fields; Kalman Theorem; Chow-Rashevski\u\i\ Theorem; Linear systems}
\begin{document}

\begin{abstract} 
 We give two   proofs of the Kalman Theorem,   alternative to the most common  ones,  which  infer such a classical result  of  Control Theory   using just  
 very basic facts on  flows of vector fields.  These  proofs are apt  to   be generalised  in diverse directions  -- in fact one of them has been already generalised, yielding new criteria for local controllability  of   non-linear real analytic controlled systems.
\end{abstract}
 \maketitle
\section{Introduction}
Consider a controlled dynamical system, whose states are represented by the  points $q = (q^i)_{ 1 \leq i \leq n}$  in the affine space  $\cQ \= \bR^n$ 
and with  evolutions   given by the solutions  of a  system of linear  first order equations  of the form
\beq\label{Thesystem-1} \dot q  = A q  + B u\eeq
for some non zero constant matrices  $A = (A^i_j)$ and $B = (B^i_a)$ and in which the controls $u(t)$ are measurable functions taking values  in  a  bounded set  $\cK \subset \bR^m$. For a fixed  $\overline q \in \cQ$,  denote by  $\cC_{\overline q}$ the set of all points $q\in \cQ$ that are {\it controllable to $\overline q$}, i.e.  for  which  there is a  control $u(t)$ which determines  a solution  $q(t)$ to \eqref{Thesystem-1}  that starts from $q$ and reaches $\overline q$ in a finite time.  A local version of the  famous Kalman Theorem consists of a criterion for determining whether  the    set   $\cC_{\overline q = 0_{\bR^n}}$ of controllable points to the origin contains an open set.   It can be stated as follows.  \\[5pt]
{\bf Theorem }(Kalman).  {\it  If $\cK$ contains a neighbourhood of $0_{\bR^m}$,  the set  $\cC_{\overline q = 0_{\bR^n}}$  contains  a  neighbourhood of  $0_{\bR^n}$  if and only if the  space $V \subset \bR^n$, which is  spanned by the vectors 
  \begin{multline} \label{tre}B_1, \ldots, B_m, \ \ A{\cdot}B_1, \ldots, A{\cdot} B_m,\ \  (A{\cdot} A){\cdot}  B_1,  (A{\cdot} A){\cdot}  B_1 \ldots, \ (A {\cdot} A){\cdot} B_m, \ldots\\
\ldots, \ \ \left(A{\cdot} \ldots {\cdot}A\right){\cdot}  B_1,  \left(A {\cdot}\ldots {\cdot}A\right){\cdot}  B_2, \ldots, \left(A{\cdot} \ldots {\cdot} A\right) {\cdot} B_m,\ \ \ldots \ ,
 \end{multline} 
has the maximal dimension  $n$ (here,  we denote by  $B_a $  the columns of $B $).}\par
The proof can be found in many textbooks (as for instance \cite{AS,Co}), and  it essentially uses   results in
Linear Algebra and Convex Geometry.\par
%\smallskip
The  maximal dimension condition of the Kalman Theorem is very much reminiscent of  a known corollary  of the  Chow-Rashevski\u\i\ Theorem on  the local controllability of
 systems  of the form $\dot q = \sum_{a = 0}^m X_a(q) u^a$.  In fact,   
the sufficiency parts  of the Kalman Theorem and of the mentioned corollary of the Chow-Rashevski\u\i\  Theorem, can be  derived from  a  very general criterion by Sussmann (\cite{Su1} or  \cite[Thm. 3.29 Ex.3.3]{Co}; on this regard, see  also   foundational  Sussmann's  and Stefan's  papers   \cite{Su0, Su, St1, St2}). 
Furthermore, a couple of results by Hermann \cite{He} and Nagano  \cite{Na} provide a common argument for proving  the necessity parts of both theorems (on this regard, see also \cite{AS,Co}).
% \end{itemize}
 Due to all this,   {\it it is commonly understood that  the  Kalman Theorem is  equivalent to  the  above quoted  corollary of the Chow-Rashevski\u\i\ Theorem}.  Since  it is not hard to check  that      ``the Kalman Theorem yields such a corollary  of the  Chow-Rashevski\u\i\ Theorem''  (see  e.g. \S \ref{KalmantoChow} below for a short proof or  \cite{El, ST} for  proofs holding  in  much more general settings), on the basis of the above common belief  it is sensible to expect the existence of a  straightforward way of  deriving   the  Kalman Theorem  from the Chow-Rashevski\u\i\ Theorem.   In this paper we make   explicit such a direct implication in two ways. 
More precisely, our main achievements  consist of   two (at best of our knowledge,  new) proofs   of  the  Kalman Theorem,  which also appear to be  appropriate  for   generalisations to fully nonlinear control systems. Actually,  the second  of our proofs  have   been successfully exploited  
in \cite{GSZ} and have led to    new  controllability   criteria  for nonlinear real analytic control system.\par
 Our first proof is  short and gives a simple way  to pass from   the (above described corollary  of the) Chow-Rashevski\u\i\  Theorem to  the  Kalman Theorem.  It shows that  the first implies  the second just on the basis of the following simple (but crucial) fact:  if $\cK$ contains a neighbourhood of the origin -- and thus,  in particular,  a convex symmetric neighbourhood of the $0_{\bR^n}$ --  then also  the controllable set   $\cC_{q = 0_{\bR^n}}$ of  \eqref{Thesystem-1}   contains a convex set  (see  (1) of Proposition \ref{theprop},  below).  
As we mentioned above we  expect that this new proof  admits   generalisations for other kinds of  control systems,  provided that  similar convexity properties for the controllable sets can be established.  On this regard, we would like to mention  that     investigations, involving ideas very close to those of  our first proof,    can be  found  in \cite{Kr}, where interesting extensions of the Kalman criterion are reached.\par
%\smallskip 
  Our second proof is much longer than the first, it is based on a discussion on the  higher dimensional space  $\cM = \bR \times \cQ \times \cK$ and it  does not involve in a direct way the   Chow-Rashevski\u\i\  Theorem -- it   just uses    a few  results on flows, which  are  properties that can be taken as  ancestors of  the   Chow-Rashevski\u\i\  Theorem.   However, these  drawbacks are balanced by the fact that our  second proof makes 
no  use of  any information on  convexity properties of  the controllable  sets. This  makes  this second proof 
even more appropriate than the first  for  generalisations  to  non-linear  control systems.  Indeed,  such generalisations  have been established in \cite{GSZ}, where  new  local controllability criteria are given  and diverse  controllability problems, for which all previously known criteria are inconclusive, are solved.
\par
The paper is structured as follows. After  section \S \ref{prel},  in which we give    a few preliminaries and introduce some convenient notation,  in \S \ref{sect3} we  recall the statements of 
the Chow-Rashevski\u\i\ and the Kalman Theorem, on which our discussion is based. In \S \ref{KalmantoChow},  for the sake of completeness, we exhibit a direct  proof  that,  for the collections of $m + 1$ vector fields $X_\a$ of   the above described form, the  property  determined   by  the Chow-Rashevski\u\i\ Theorem can be inferred directly from the Kalman Theorem (as we mentioned above,   other proofs that hold  in a much more general setting   are given   in   \cite{El, ST}).  In \S \ref{ChowtoKalman} and \S \ref{sect5}  there are  our main results, i.e.    the mentioned   new proofs for the Kalman Theorem, one based on the Chow-Rashevski\u\i\ Theorem and  the other on general properties  of flows. \par
Throughout this paper we  adopt the  Einstein convention on summations. \par
 \noindent{\it Acknowledgments.}
We are   sincerely grateful to David Chillingworth  for  kindly pointing out Peter \v Stefan's  works   to us.\par
\medskip
\section{Preliminaries}\label{prel}
\subsection{Vector fields, Lie brackets and flows}\  We recall that a    {\it $\cC^k$ vector field  on    $\cU \subset \bR^n$}  is   an application   in  $\cC^k(\cU, \bR^n)$, which maps  each   $x = (x^i)_{ 1 \leq i \leq n} \in \cU$  into a  vector $X(x) = (X^1(x), \ldots, X^n(x))$ applied to the  point  $x$. We also recall that  each $\cC^k$ vector field $X = (X^i)_{1 \leq i \leq n}$ can be  identified  with  the first order  differential operator on   real  functions 
$ X(f)  \= X^i \frac{\p f}{\p x^i}$.  
 Under this identification,  the   Lie bracket $[X, Y]$ between two  $\cC^k$ vector fields $X, Y$ on $\cU$, $k \geq 1$,  can be  defined as    the   $\cC^{k-1}$ vector field on $\cU$ identified with the first order  differential operator 
\begin{multline*} [X, Y](f) \= X(Y(f)) - Y(X(f)) = \left( X^j \frac{\p Y^i } {\p x^j}- Y^j \frac{\p X^i } {\p x^j}\right) \frac{\p f}{\p x^i} =  \\
 = \left( X(Y^i)- Y(X^i)\right) \frac{\p f}{\p x^i} \ .\end{multline*}
 Given  a $\cC^k$ vector field $X$ on $\cU \subset \bR^n$,   a corresponding  (local)  {\it flow}  on a  relatively compact open subset  $\wt \cU \subset \cU$  is a 
 map  $\Phi^X: (- \ve , \ve) \times \wt \cU \longrightarrow \bR^n$, which is constructed as follows  for a sufficiently small $\ve> 0$  .  For  any $ x \in \wt \cU$, let us denote by   $\g^{(x)}(s)$, $s \in (- \ve^{(x)}_1, \ve^{(x)}_2)$,  the unique maximal solution  to the 
 differential problem 
 $\dot \g^{(x)} (s) = X(\g^{(x)}(s))$, $ \g^{(x)} (0) = x$.  Given  $0 < \ve  \leq \min_{q \in \wt \cU}\{\ve^{(x)}_1, \ve^{(x)}_2\}$,  the flow $\Phi^X$  is  the map defined  by 
 $$\Phi^X: (- \ve, \ve) \times \wt \cU \longrightarrow \bR^n\ ,\qquad \Phi^X(s, x) \= \g^{(x)}(s)\ . $$
 By construction,  for any  fixed  $s_o \in (- \ve, \ve)$  the  map 
 $\Phi^X_{s_o} \= \Phi^X(s_o, \cdot):\wt  \cU \to \bR^n$
 is a diffeomorphism from $\wt \cU$ onto $\Phi^X_{s_o}(\wt \cU)$, with   inverse   $\Phi^X_{-s_o} = \Phi^{-X}_{s_o}$, and  with
 $$ \frac{\p \Phi^X(s, x)}{\p s}\bigg|_{s = 0} = X(x)\qquad \text{for any}\ x \in \wt \cU\ .$$
   Moreover,   $\Phi^X_{s_o = 0} = \Id_{\wt \cU}$ and, whenever $s,s' $ are such that $s + s' \in (- \ve, \ve)$ and $ \Phi^X_{s}(x) , \Phi^X_{s'}(x) \in \wt \cU$,   
   $$\Phi^X_{s+ s'} (x) = \Phi^X_s \big( \Phi^X_{s'} (x)\big) =  \Phi^X_{s'} \big( \Phi^X_{s} (x)\big)\ .$$
\par
\medskip
 \subsection{Pushed-forward vector fields and their flows} Let $X$ be a $\cC^k$ vector field  on an open set $\cU \subset \bR^n$ and $\f: \cU \to \cV =  \f(\cU) \subset \bR^n$ a diffeomorphism between $\cU$ and 
 $\cV \subset \bR^n$. The {\it push-forward of $X$ by the map $\f$} is the vector field  on $\cV$
 $$ \f_*(X) =\big( \f_*(X)^1, \ldots  \f_*(X)^n\big) $$ 
 corresponding to the linear differential operator defined by  
 \begin{multline} \label{uno} \f_*(X)(f)\big|_q\= X(f \circ \f)\big|_{\f^{-1}(q)} 
 = X^j(\f^{-1}(q))\frac{\p( f \circ \f)}{\p q^j} \bigg|_{\f^{-1}(q)}=  \\
 = X^j(\f^{-1}(q)) \frac{\p \f^i}{\p q^j}\bigg|_{\f^{-1}(q)} \frac{\p f}{\p q^i}\bigg|_q 
  = \left( J(\f)^i_j X^j\right)\bigg|_{\f^{-1}(q)} \frac{\p f}{\p q^i}\bigg|_q\ ,\end{multline}
  where $J(\f)$ is the Jacobian matrix of   $\f$. 
  In other words,  $\f_*(X)$ is   the vector field on $\cV$ with  components $ \f_*(X)^i(q) =   \bigg( J(\f)^i_j X^j\bigg)(\f^{-1}(q))$.\par 
 The following    fact is well known (see e.g. \cite[Prop. 1.55]{Wa}):
 {\it  For any two $\cC^k$ vector fields $X$, $Y$, $k \geq 1$, on $\cU$  and any $\cC^2$ diffeomorphism $\f: \cU \to \cV \subset \bR^n$ then   $\f_*([X, Y]) = \left[\f_*(X), \f_*(Y)\right]$}.  This and a  direct calculation imply that  the  following relation between flows  holds at all  points where both sides are defined: 
  \beq  \label{useful} 
 \Phi^X_s \circ \Phi^Y_t = \Phi_t^{(\Phi^X_{s})_*(Y)}  \circ \Phi^{X}_{s}\ .
  \eeq
 \par 
 \subsection{Piecewise regular curves and their compositions}
 \label{2_3}
Given a manifold $\cN$,   by   {\it  parameterised regular curve} we mean a  $\cC^1$ map $\g:[a,b] \subset \bR  \longrightarrow \cN$    with nowhere vanishing velocity $\dot \g(t)$ and which is  an homeomorphism between  $[a,b]$ and  the  image $\g([a,b])$ of the map.  The image $\g([a,b]) \subset \cN$ is called {\it (non-parameterised) regular curve}.   
     Two  parameterised regular curves  $\g(t)$,  $\wt \g(s)$ with the  same image  are called  {\it consonant} if  one is determined    from the other via  a   change of  parameter $t = t(s)$ with 
   $\frac{dt}{ds} > 0$ at all points. Consonance  is clearly an equivalence relation  and    an  {\it orientation} of a  regular curve  is  a  choice of one of the   only two possible   equivalence classes of its  parameterisations. 
 An {\it oriented curve} is a regular curve  with an orientation. In the following,  we denote it  by means of  one  of the consonant   parameterisations $\g(t)$ of the chosen equivalence class.
  \par
\smallskip
Given two    oriented curves  $\g_1(t)$, $\g_2(s)$, $t \in [a, b]$,  $s \in [a',b']$,  with   $\g_1(b) = \g_2(a')$, their union  is called     {\it (oriented) composition} and we denote it by  $\g_1\ast \g_2$. The  oriented regular curves  $\g_1$, $\g_2$  that  give  the composition   $\g_1 \ast \g_2$ are called  {\it regular arcs} of the composition.  In a  similar  way  we define the   {\it (oriented) composition} of a finite number of oriented regular curves,  each of them sharing its final endpoint with the initial endpoint of the succeeding one.  The   subsets of $\cN$, which are   compositions of a finite number  of oriented regular curves,   are called  {\it piecewise regular (oriented) curves}.\par
\medskip
\section{The  Chow-Rashevski\u\i\ Theorem and the Kalman Theorem}\label{sect3}
 \subsection{Orbits of sets of vector fields and the  Chow-Rashevski\u\i\ Theorem} 
 Let  $\cF = \{ X_A\}_{A \in \cJ}$  be a family of $\cC^\infty$ vector fields on an open subset  $\cU \subset \bR^n$, indexed by the elements of a (possibly uncountable) set $\cJ$,  and denote by $\wh \cF$  the space of  all finite linear combinations with constant coefficients  of the vectors in $\cF$, i.e. 
 $$\wh \cF = \langle \cF \rangle \= \bigg\{Y = \l_{A_1} X_{A_1} + \ldots +   \l_{A_N} X_{A_N} \ , \  \l_{A_i} \in \bR\ ,\ N\in \bN\bigg\}\ .$$
The  {\it orbit   by $\cF$ of a point $x_o \in \bR^n$} 
 is the subset  of $\bR^n$ 
 \begin{multline} \orb^\cF(x_o) = \bigg\{\ x \in \cU\ : \ x = \Phi^{Y_{1}}_{s_1} \circ  \ldots \circ \Phi^{Y_{N}}_{s_N}(x_o)\ ,\ \  N \in \bN\ , \ s_i \in \bR \ , \ Y_{i} \in \wh \cF\ \bigg\}\ .
 \end{multline}
 The {\it Lie  span of $\cF$  at  the point $x_o$} is the linear subspace  of $T_{x_o} \bR^n = \bR^n$ defined by  
% \sideremark{ Si considerano tutti i possibili valori per il naturale $N$ -- \`e scritto nella formula.}
$$\Lie(\cF)_{x_o} \= \bigg\{\ v = [Y_1, [Y_2, \ldots [Y_{N - 2}, [Y_{N-1}, Y_N]]\ldots]]\big|_{x_o}\ ,\  N \in \bN\ ,\  Y_j \in \wh \cF\bigg\}\ .$$
This notation being fixed,  we can state Sussmann's refined and improved  version of the  Chow-Rashevski\u\i\ Theorem as follows (see also \cite{St1, St2} for an even  more general variant of Sussmann's version and  \cite{GSZ1, GSZ} for discussions and  improvements in various directions). 
\begin{theo}[Chow-Rashevski\u\i-Sussmann \cite{Ch, Ra, Su0, Su}] \label{Chow_Rashevsky}  For any  family $\cF$ of $\cC^\infty$ vector fields on an open subset $\cU \subset \bR^n$ and for any $x_o \in \cU$, the corresponding set  $\orb^\cF(x_o)$ is  a connected  immersed submanifold of $\bR^n$ of dimension greater than or equal to  $\dim \Lie(\cF)_{ x_o }$. 
%\sideremark{\rm Se i campi non sono analitici reali, potrebbe succedere che  $\orb^\cF(x_o)$  contiene un intorno  di $x_o$, ma non avere $\dim  \Lie(\cF)_{x_o} = n$. Al momento non mi ricordo se e in quale punto per noi sia importante 
%avere il "se e solo se'',  ma  mi sembra di ricordare che c'era un passo per cui ne avevo bisogno. Altrimenti non l'avrei messo. Se volete si indaga -- se potete, cercate anche voi il punto dove viene usato.}  
Moreover, if the vector fields are real analytic, then     $\orb^\cF(x_o)$  contains  a   neighbourhood of $x_o$
 if and only if    $\dim  \Lie(\cF)_{x_o} = n$.
\end{theo}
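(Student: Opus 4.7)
The plan is to realise $\orb^\cF(x_o)$ as an immersed submanifold tangent to a suitable \emph{orbit distribution} $\cD$, and then to compare $\dim\cD_{x_o}$ with $\dim\Lie(\cF)_{x_o}$. Let $\cG$ denote the pseudo-group of local diffeomorphisms generated by the flows $\Phi^Y_s$ with $Y\in\wh\cF$, so that by definition $\orb^\cF(x_o)=\{\phi(x_o):\phi\in\cG\}$. For each point $x$ on the orbit set
$$\cD_x \= \Span_\bR\{(\phi_* Y)(x) : Y\in\wh\cF,\ \phi\in\cG\}\ .$$
The first and most delicate step is to show that $\dim\cD_x$ is locally constant along $\orb^\cF(x_o)$: once this is established, a singular-Frobenius argument in the spirit of Sussmann and Stefan equips the orbit with the structure of a connected immersed submanifold whose tangent space at $x$ is precisely $\cD_x$.

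The central tool for the constant-rank property is the commutation identity \eqref{useful}, $\Phi^X_s\circ\Phi^Y_t=\Phi^{(\Phi^X_s)_* Y}_t\circ\Phi^X_s$, which shows that the pushforward $(\Phi^X_s)_* Y$ generates a flow already belonging to $\cG$. From this it follows that every $\phi\in\cG$ transports a frame of $\cD_{x_o}$ to a frame of $\cD_{\phi(x_o)}$ of the same cardinality, and that the traces of these transported frames, parameterised by the flow times of $\phi$, supply local coordinates on the orbit.

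To obtain the dimension estimate $\dim\orb^\cF(x_o)=\dim\cD_{x_o}\geq\dim\Lie(\cF)_{x_o}$, I would exploit the identity
$$\frac{d^j}{ds^j}\bigg|_{s=0}(\Phi^X_s)_* Y = (-1)^j(\ad X)^j Y\ ,$$
obtained by iterating \eqref{useful}. This realises every iterated bracket $[Y_1,[Y_2,\ldots,[Y_{N-1},Y_N]\ldots]]\big|_{x_o}$ as a mixed partial derivative (up to sign) of the smooth map
$$(s_1,\ldots,s_{N-1})\longmapsto \bigl(\bigl(\Phi^{Y_1}_{s_1}\circ\cdots\circ\Phi^{Y_{N-1}}_{s_{N-1}}\bigr)_*Y_N\bigr)(x_o)\ ,$$
whose values all lie in the fixed linear subspace $\cD_{x_o}\subset\bR^n$; hence all its derivatives do as well, and $\Lie(\cF)_{x_o}\subseteq\cD_{x_o}$.

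For the real analytic statement, sufficiency is immediate from part one: if $\dim\Lie(\cF)_{x_o}=n$, then $\orb^\cF(x_o)$ is an $n$-dimensional immersed submanifold of $\bR^n$ through $x_o$, and therefore contains a neighbourhood. For necessity, suppose the orbit contains a neighbourhood of $x_o$, so that $\cD_{x_o}=\bR^n$; the reverse inclusion $\cD_{x_o}\subseteq\Lie(\cF)_{x_o}$ is now supplied by the convergent expansion
$$(\Phi^X_s)_* Y = \sum_{j\geq 0}\frac{(-s)^j}{j!}(\ad X)^j Y\ ,$$
valid in the analytic category, whose terms all lie in $\Lie(\cF)$; iterating along any composition $\phi=\Phi^{Y_1}_{s_1}\circ\cdots\circ\Phi^{Y_N}_{s_N}$ yields $(\phi_* Y)(x_o)\in\Lie(\cF)_{x_o}$, and therefore $\dim\Lie(\cF)_{x_o}=n$. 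The principal obstacle throughout is the locally-constant-rank statement for $\cD$ along the orbit: this genuinely exceeds classical Frobenius, and is precisely the substantive content of the Sussmann--Stefan orbit theorem that one must import.
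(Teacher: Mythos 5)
You should first note a structural point: the paper does not prove Theorem \ref{Chow_Rashevsky} at all. It is imported wholesale from the literature \cite{Ch, Ra, Su0, Su} (with \cite{St1, St2} cited for more general variants), and the paper's own contributions begin only downstream of it, in \S 4--6. So there is no internal proof to compare your attempt against; it can only be measured against the cited sources.

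Measured that way, your outline faithfully reproduces the standard route, and its computational backbone is correct: the invariance $\phi_*\cD_x=\cD_{\phi(x)}$ for $\phi\in\cG$ does follow from \eqref{useful} together with $\phi_*(\psi_*Y)=(\phi\circ\psi)_*Y$, giving constant rank of $\cD$ along the orbit; the sign in $\tfrac{d^j}{ds^j}\big|_{s=0}(\Phi^X_s)_*Y=(-1)^j(\ad X)^j Y$ is right, and your mixed-partials argument legitimately places $\Lie(\cF)_{x_o}$ inside the closed linear subspace $\cD_{x_o}$; the analytic necessity argument is exactly the Hermann--Nagano mechanism \cite{He, Na}, modulo the minor, fixable omission that the series for $(\Phi^X_s)_*Y$ converges only for small $s$, so one must subdivide flow parameters and propagate the inclusion stepwise along a composition, using at each step that $(\Phi^X_s)_*$ carries $\Lie(\cF)_z$ into $\Lie(\cF)_{\Phi^X_s(z)}$. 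The one substantive caveat is the one you candidly name yourself: the passage from constant rank of $\cD$ to the immersed-submanifold structure with $T_x\,\orb^\cF(x_o)=\cD_x$ is not a refinement of classical Frobenius that can be asserted --- it is the entire content of the Sussmann--Stefan orbit theorem, which is essentially the statement being proved. As a self-contained proof your proposal is therefore circular at its core step; as an account of how the theorem follows from the cited results it is accurate, and in that respect it mirrors the paper's own treatment, which likewise takes the orbit theorem as given.
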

\par
 \medskip
 \subsection{The Kalman Theorem}\label{3322}
Consider the   system \eqref{Thesystem-1}, which,  for brevity,  in the following will be   called {\it the linear system} $(A,B)$.  Given a  measurable control $u_o:  [0, T] \to \cK$,  we  denote by  $q^{(A, B| q_o, u_o)}:[0, T] \to \cQ$ the 
unique absolutely continuous map which  solves the equations with control $u_o$ and initial condition $q_o$,  that is  the solution to 
\beq \label{Thesystem-1*} \begin{array}{ll} \dot q(t)  = A q(t)  + B u_o(t)\  \text{for a.e.}\ t \in [0, T]\ ,
\\ q(0) = q_o\ .
\end{array}\eeq
For any choice of points $\overline q, q_o \in \cQ$ we may  consider  the following associated subsets of $\cQ$: 
\begin{itemize}[leftmargin = 15pt]
\item[--] The  {\it set of the  states  that are controllable to $\overline q$}, i.e. the set of $q \in \cQ$  from which the controlled system can start  to   reach the prescribed {\it final  state} $\overline q$ in a finite time $T \geq 0$;  
\item[--] The {\it set of all states  that are reachable from  $q_o$},  i.e.  the set of  $q \in \cQ$  that can be  reached in finite time $T 	\geq 0$ starting from  the prescribed {\it initial state} $q_o$. 
\end{itemize}
We denote such two sets, respectively,   by 
\beq \label{C} \cC^{(A, B)}_{\overline q} \= \bigg\{ q_o \in \bR^n\ : 
\ q^{(A,B|q_o, u_o)}(T) = \overline q\ , \ 
 \text{for some} \ T\geq  0\  \text{and} \ u_o:[0, T] \to \cK\ \text{meas.}\bigg\}\ ,\eeq
\beq \label{O} \cO^{(A, B)}_{q_o} \=  \bigg\{ \overline q \in \bR^n\ :
  \ q^{(A,B| q_o, u_o)}(T) = \overline q\ ,\
\text{for some} \ T\geq  0 \  \text{and} \ u_o:[0, T] \to \cK\ \text{meas.}\bigg\}\ .\eeq
The  subsets  of these two sets,  given by the points that are controllable  to $\bar q$ or reachable from $q_o$, respectively,     by means of  piecewise constant controls are denoted by   $\cC^{(A, B)\text{p.c.}}_{\overline q} $ and   $\cO^{(A, B)\text{p.c.}}_{q_o} $, respectively.\par
 \smallskip
  Due to the linearity of the system \eqref{Thesystem-1} and the fact that the matrices $A$ and $B$ are constant, 
  the set of controllable states and the set of reachable states are  related each other  as follows.  If we  set  $\bA \= - A$, $\bB = - B$,  one can 
  directly see that  for any $q \in \cQ = \bR^n$, 
 \beq \label{3_6}  \cO^{(A, B)}_{q} = \cC^{(\bA, \bB)}_{q} \ ,\qquad  \cC^{(A, B)}_{q} = \cO^{(\bA, \bB)}_{q}\ .   \eeq 
Moreover, if the control set $\cK$ is symmetric (i.e. such that $\cK = - \cK$) we also have that 
   \beq  \label{3_7}  \cC^{(A, B)}_{q} = \cC^{(A, \bB)}_{q} = \cO^{(\bA, B)}_{q}  \ ,\qquad  \cO^{(A, B)}_{q} = \cO^{(A, \bB)}_{q} = \cC^{(\bA, B)}_{q}\ . \eeq 
 Similar relations hold for the  sets $\cC^{(A, B)\text{p.c.}}_{q} $, $\cO^{(\bA, B)\text{p.c.}}_{q} $, etc.
\par
 We are now ready to state the version   of the Kalman  Theorem,  which we are going to use in all forthcoming discussion  and which is well known to be fully equivalent to the statement  in the Introduction. 
In a matter of clarity,   in what follows  the   space $V$,  generated by the vectors defined in  \eqref{tre},  will be   denoted by $V = \Kalman^{(A, B)}$.
  \begin{theo}[Kalman Theorem] \label{Kalman-criterion} If  $\cK\subset\bR^m$ is a neighbourhood of $0_{\bR^m}\in\bR^m$, the set   $\cC^{(A,B)\text{\rm p.c.}}_{\overline q = 0}$  contains a neighbourhood of $ 0_{\bR^n}\in \bR^n$ if and only if   $\dim \Kalman^{(A, B)} = n$. 
 \end{theo}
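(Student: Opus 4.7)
The plan is to derive Theorem~\ref{Kalman-criterion} from the real-analytic case of Chow-Rashevski\u\i-Sussmann (Theorem~\ref{Chow_Rashevsky}), combined with the convexity and symmetry properties that the linearity of \eqref{Thesystem-1} forces upon the controllable set. I first associate to the system $(A, B)$ the family of real analytic vector fields
\[
\cF \= \{X_0, X_1, \ldots, X_m\},\qquad X_0(q) \= Aq,\qquad X_a \= B_a\ \text{(constant)},\ a = 1,\ldots,m,
\]
so that a piecewise constant control $u$ with values $u_k = (u^a_k) \in \cK$ on successive subintervals produces a trajectory which is a concatenation of forward-time flows of the vector fields $Y_{u_k} \= X_0 + u^a_k X_a \in \wh\cF$; therefore $\cC^{(A, B)\text{\rm p.c.}}_0 \subseteq \orb^\cF(0)$. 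Since $[X_0, X_a] = -AB_a$ and brackets of constant vector fields vanish, iteration yields $\ad(X_0)^k X_a = (-1)^k A^k B_a$, so that at $q_o = 0$ (where $X_0(0) = 0$) one obtains
\[
\Lie(\cF)_0 = \Kalman^{(A, B)}.
\]

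For the \emph{necessity} direction, Theorem~\ref{Chow_Rashevsky} immediately gives: if $\dim \Kalman^{(A,B)} < n$, then $\orb^\cF(0)$ is an immersed submanifold of dimension strictly less than $n$, hence has empty interior, and the same holds for its subset $\cC^{(A, B)\text{\rm p.c.}}_0$. For \emph{sufficiency}, assume $\dim \Kalman^{(A,B)} = n$, fix a convex symmetric neighbourhood $\cK' \subseteq \cK$ of $0$, and note that $\cC^{(A, B)\text{\rm p.c.}}_0$ computed with controls in $\cK'$ is \emph{convex} --- by the linearity of \eqref{Thesystem-1*} applied to a convex combination $\lambda u_1 + (1 - \lambda) u_2$ of two admissible controls, after extending the shorter one to a common time horizon by the trivial control $u \equiv 0$ (which fixes $0$) --- and \emph{symmetric}, via the involution $(q, u) \mapsto (-q, -u)$ together with $\cK' = -\cK'$. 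Granted that $\cC^{(A, B)\text{\rm p.c.}}_0$ also \emph{spans} $\bR^n$, the convex hull of a basis of $\bR^n$ drawn from it together with the negatives of those basis vectors lies in $\cC^{(A, B)\text{\rm p.c.}}_0$ and forms an open neighbourhood of $0$, finishing the proof.

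The main obstacle is the spanning property, since $\cC^{(A, B)\text{\rm p.c.}}_0$ sits inside $\orb^\cF(0)$ only as a very restricted subfamily of trajectories (forward-time flows of $Y_u$ with $u \in \cK'$), while the orbit is generated by arbitrary elements of $\wh\cF$ and arbitrary-sign times. I would establish spanning directly from the Taylor expansion
\[
q(t) = \int_0^t e^{A(t - s)} Bu\, ds = \sum_{k \geq 1} \tfrac{t^k}{k!}\, A^{k - 1} Bu
\]
valid for a constant control $u \in \cK'$ on $[0, t]$: this shows simultaneously that $\cO^{(A, B)\text{\rm p.c.}}_0 \subseteq \Kalman^{(A, B)}$ and, since $q(t)$ lies in $\cO^{(A, B)\text{\rm p.c.}}_0$ for every $t$ and the span of this set is a closed linear subspace containing every Taylor coefficient of the analytic curve $t \mapsto q(t)$, that $A^{k-1} Bu \in \mathrm{span}(\cO^{(A, B)\text{\rm p.c.}}_0)$ for each $k \geq 1$ and $u \in \cK'$. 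Because $\cK'$ is a neighbourhood of $0$, linearity then forces $\mathrm{span}(\cO^{(A, B)\text{\rm p.c.}}_0) = \Kalman^{(A, B)}$. Finally, the duality relations \eqref{3_7} --- valid because $\cK'$ is symmetric --- identify $\cC^{(A, B)\text{\rm p.c.}}_0$ with $\cO^{(-A, B)\text{\rm p.c.}}_0$; since $\Kalman^{(-A, B)}$ and $\Kalman^{(A, B)}$ coincide as subspaces, the spanning property transfers to $\cC^{(A, B)\text{\rm p.c.}}_0$ and, together with convexity and symmetry, completes the sufficiency argument.
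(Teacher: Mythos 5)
Your proposal is correct, but its key step takes a genuinely different route from the paper. The paper's first proof (\S\ref{ChowtoKalman}) also reduces to a convex, symmetric control set and rests on the convexity and symmetry of $\cC^{(A,B)\text{p.c.}}_{\overline q = 0}$, but it obtains the spanning property indirectly: it introduces the smallest subspace $L$ containing the controllable set, shows by a forward-in-time trajectory argument that every field of $\cF^{(A,B)}$ is tangent to $L$ (this is exactly where the paper confronts the forward/backward asymmetry between controllability and orbits), and then invokes Theorem~\ref{Chow_Rashevsky} to conclude $\dim L = \dim \orb^{\cF^{(A,B)}}(0) \geq \dim \Lie(\cF^{(A,B)})_{0} = \dim \Kalman^{(A,B)}$ (Proposition~\ref{theprop}). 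You instead prove $\mathrm{span}\,\cC^{(A,B)\text{p.c.}}_{\overline q = 0} \supseteq \Kalman^{(A,B)}$ directly from the variation-of-constants series $q(t) = \sum_{k\geq 1}\tfrac{t^k}{k!}A^{k-1}Bu$, the fact that a (closed) subspace containing an analytic curve contains all its Taylor coefficients, and the transfer \eqref{3_7} from $\cO^{(-A,B)\text{p.c.}}_{0}$ to $\cC^{(A,B)\text{p.c.}}_{0}$. The upshot is that, despite your opening sentence, your sufficiency argument never uses the Chow--Rashevski\u\i\ Theorem at all: it is essentially the classical linear-systems computation (a span-based variant of the standard Gramian/annihilator argument) combined with convex geometry. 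This buys elementarity and self-containedness, but it forgoes the point the paper is making, namely that Kalman sufficiency can be read off from the orbit theorem; conversely, your necessity step does lean on Theorem~\ref{Chow_Rashevsky}, where the paper uses only the elementary linear-algebra invariance of $\Kalman^{(A,B)}$ under the flows (end of \S\ref{KalmantoChow}).

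Two small inaccuracies, neither fatal. First, Theorem~\ref{Chow_Rashevsky} as stated gives only the \emph{lower} bound $\dim \orb^{\cF}(x_o) \geq \dim \Lie(\cF)_{x_o}$, so your claim that $\dim \Kalman^{(A,B)} < n$ forces the orbit to have dimension strictly less than $n$ does not follow from its first sentence (the matching upper bound in the analytic case is Nagano's theorem). Your conclusion survives because the second, real-analytic clause of the theorem says precisely that the orbit contains a neighbourhood of $x_o$ if and only if $\dim \Lie(\cF)_{x_o} = n$; since $\cC^{(A,B)\text{p.c.}}_{0} \subseteq \orb^{\cF}(0)$ and $\Lie(\cF)_0 = \Kalman^{(A,B)}$, necessity follows verbatim. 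Second, your series establishes the inclusion $\cO^{(A,B)\text{p.c.}}_0 \subseteq \Kalman^{(A,B)}$ only for constant controls; for piecewise constant controls one also needs the $A$-invariance of $\Kalman^{(A,B)}$ (immediate from \eqref{tre}, which contains all powers $A^kB_a$) --- but since this inclusion is never used in your argument, no gap results.
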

The  equality $\dim \Kalman^{(A, B)} = n$ is commonly called {\it Kalman condition}.
\medskip
  \section{The  Kalman Theorem implies the  corollary of the Chow-Rashevski\u\i\ Theorem for sets of 
  vector fields with  either constant or linear components}\label{KalmantoChow}
Given a linear system $(A, B)$, let   $\cF^{(A, B)}$ 
be     the finite  set of   real analytic vector fields  $\cF^{(A, B)} \= \{X_0, X_1, \ldots, X_m\}$ on $\bR^n$,  with  $X_\a$   defined at each point  $q$  by 
\beq  \label{211}
\begin{split}
&\hskip 4 cm X_0(q)  \= (-A^1_j q^j, \ldots, -A^n_j q^j) =- A\,q\qquad \text{and} \\
 &X_1(q) \= (B_1^1, \ldots, B_1^n)\ ,\ X_2(q) \= (B_2^1, \ldots, B_2^n)\ ,\ \ldots, \ X_m(q) \= (B_m^1, \ldots, B_m^n) \ .
  \end{split}
  \eeq
  Note that, for $\a=1,\ldots,m$, the components of the  vector field $X_\a$  are precisely  the entries of the $\a-$th column of the matrix $B$.
  Finally, for  any  $\l = (\l^i) \in \bR^m$, let us set
  $Y^{(\l)} \= X_0 + \l^i X_i$. \par
 We want  to   show  that  {\it using  the  Kalman Theorem in a direct and  elementary way, one can  prove that  the orbit   $\orb^ {\cF^{(A,B)}}(0_{\bR^n})$ contains a neighbourhood of the origin if and only if  $\dim \Lie(\cF^{(A,B)})_{q = 0}=n$}. 
  \par
 Indeed, 
   for any   $\wh q \in \bR^n$ and   $\l \in \bR^m$,  the   integral  curve $t \mapsto \Phi^{Y^{(\l)}}_t(\wh q)$  of the vector field $Y^{(\l)}$ coincides with the solution   $q^{(-A, B| \wh q, u_o)}(t)$ of the linear system $(-A, B)$ determined by the constant control $u_o(t) \equiv \l$. Hence if we consider the linear system $(A, B)$  with a  control set  $\cK$ which  is  not only  open, but also  symmetric,  by   \eqref{3_6} and \eqref{3_7}  for any fixed point $\overline q$,   the corresponding set $\cC^{(A, B)\text{p.c.}}_{\overline q} \bigg(= \cO^{(-A, B)\text{p.c.}}_{\overline q}\bigg)$  coincides with  the set of points  of  the form
  $$q(T) = \Phi^{Y^{(\l_N)}}_{s_N} \circ \ldots \circ \Phi^{Y^{(\l_1)}}_{s_1}(\overline q)\ ,\qquad T = s_1 + s_2 + \ldots + s_N> 0\ ,$$
 for some finite sequence of  $m$-tuples $\l_i = (\l_i^1, \ldots, \l^m_i)$.  Since each such a  point is  in $\orb^{\cF^{(A, B)}}(\overline q)$,   we conclude that 
$\cC_{\overline q}^{(A, B) \text{\rm p.c.}}  \subset \orb^ {\cF^{(A,B)}}(\overline q)$. 
  From this it follows immediately that {\it if the linear system $(A, B)$ satisfies the Kalman condition, then, by the Kalman Theorem, not only      $\cC_{\overline q =0_{\bR^n}}^{(A, B) \text{\rm p.c.}}$ contains a neighbourhood of $0_{\bR^n}$,  but      also   the orbit $\orb^ {\cF^{(A,B)}}(0_{\bR^n})$ contains a neighbourhood of the origin} as  desired. \par
  \smallskip
We claim that the converse is  true as well.   The proof of  this   is indeed elementary and   needs neither the Kalman Theorem nor other results nor convexity assumptions,  except some standard  linear algebra. 
For the sake of completeness,  here are the  details.\par
Assume that the linear system $(A, B)$ does not satisfy the Kalman condition, i.e. that the vector space $\Kalman^{(A, B)}$   has dimension $n' < n$, and  pick a set of  $n'$ linearly independent vectors among  the generators \eqref{tre}, say $C_1, \ldots, C_{n'}$.  Then consider a set of $n - n'$ additional vectors, denoted by $C_{n'+1}$, \ldots, $C_n$ not belonging to $\Kalman^{(A, B)}$ ,  which  complete the previous vectors  to a basis for $\bR^n$.  
 Applying an appropriate linear change of  coordinates, there is no loss of generality if we assume that  $C_1 = (1, 0, \ldots, 0)$, $C_2 = (0, 1, \ldots, 0)$, \ldots, $C_n = (0, 0, \ldots, 1)$  and that 
  $$\Kalman^{(A, B)} = \langle C_1, \ldots, C_{n'} \rangle =  \{q_{n' + 1} =  \ldots = q_n = 0\}. $$
Note that,  being  each vector  $C_i$, $1 \leq i \leq n'$,   in $\Kalman^{(A, B)}$,  the corresponding  vector $A \cdot C_i$ is  also   in $\Kalman^{(A, B)}$. This implies that  the matrix $A$ (in the considered new coordinates!) has the form 
\beq A = \left( \begin{array}{cccccc} a_1^1 & \hdots & a_{n'}^1 & a_{n' + 1}^1 & \hdots & a_n^1\\
a_1^2 & \hdots & a_{n'}^2 & a_{n' + 1}^2 & \hdots & a_n^2\\
\vdots & \ddots & \vdots  & \vdots  & \ddots & \vdots\\
a_1^{n'} & \hdots & a_{n'}^{n'} & a_{n' + 1}^{n'} & \hdots & a_n^{n'}\\
0& \hdots & 0 & a_{n' + 1}^{n'+1}  & \hdots & a_n^{n'+1}\\
\vdots & \ddots & \vdots  & \vdots  & \ddots & \vdots\\
0& \hdots & 0 & a_{n' + 1}^{n}  & \hdots & a_n^{n}
\end{array} 
\right)\ .\label{specialA} \eeq
 In particular,  for any $q \in \Kalman^{(A, B)}$, also the vector $A {\cdot} q$ is in $\Kalman^{(A, B)}$.  It follows that the value  at any  point of $\Kalman^{(A, B)} = \{q_{n' + 1} =  \ldots = q_n = 0\}$ of a vector field in $\cF^{(A, B)}$   is a  vector which is tangent to $\Kalman^{(A, B)}$ and  that 
 any  flow  of a  vector field in $\wh{\cF^{(A,B)}}$ maps the subspace $\Kalman^{(A, B)}$ into  itself. We conclude that {\it if the linear system $(A, B)$ does not satisfies the Kalman condition, then  the orbit $\orb^ {\cF^{(A,B)}}(0_{\bR^n})$ is contained  in $\Kalman^{(A, B)} \subsetneq \bR^n$ and cannot contain any neighbourhood of the origin}, as claimed. 
  \par
  \smallskip
  In order to conclude the proof of the claim   \\[3pt]
  ``{\it $\orb^ {\cF^{(A,B)}}(0_{\bR^n})$  contains a neighbourhood of $0_{\bR^n}$ if and only if   $\dim  \Lie(\cF^{(A,B)})_{q = 0} = n$}'' \\[3 pt] 
   it remains   to show that,  for such a kind of set of vector fields,  the Kalman condition is equivalent to the condition  $\dim \Lie(\cF^{(A, B)})_{ q = 0} = n$. For this,  it suffices to notice that,  due to  the constancy of the vector fields $X_1,\ldots, X_m$,  in  the set of all    iterated Lie brackets $[X_{i_1}, [X_{i_2}, \ldots, [X_{i_{N-1}}, X_{i_N}]\ldots]]$, $N \geq 2$,  the only non   trivial  brackets  are those having the form
 \beq \label{Lielie}
\underset{\text{\rm $N-1$-times}}{\underbrace{ [X_{0}, [X_{0}, \ldots, [X_{0}, }}X_{j}]\ldots]] =  \big(\big(\underset{\text{\rm $N-1$-times}}{\underbrace{A {\cdot} \ldots {\cdot} A}}\big)^1_\ell B_j^\ell, \ldots, \big(\underset{\text{\rm $N-1$-times}}{\underbrace{A {\cdot} \ldots {\cdot} A}}\big)^n_\ell   B^\ell_j \big)\ . \eeq
From this  it follows immediately that $ \Kalman^{(A, B)}  =  \Lie(\cF^{(A, B)})\big|_{\overline q = 0}$ and this   immediately gives  the desired equality  between the two dimensions.
\par
\bigskip
 \section{A proof of the Kalman Theorem,
 based on the  Chow-Rashewski\u\i\ Theorem} \label{ChowtoKalman}
We give now our first new proof of the Kalman Theorem,  which shows how that theorem can be obtained as a corollary of the  Chow-Rashevki\u\i\ Theorem. \par
\smallskip
First of all, we stress the fact that the necessity part of  Theorem \ref{Kalman-criterion} (or, equivalently, the fact that if the Kalman condition does not hold, then the set  $\cC^{(A,B)}_{\overline q = 0_{\bR^n}} = \cO^{(-A,B)}_{\overline q = 0_{\bR^n}}$ cannot contain any neighbourhood of the origin) is a consequence of  the same linear algebra  arguments   considered at the end of \S \ref{KalmantoChow}. Indeed, if $\dim \Kalman^{(A,B)} = n'  < n$, by  those arguments,  there  is no loss of generality if we  assume that   $ \Kalman^{(A, B)}$ is the space $ \Kalman^{(A, B)} =  \{ q^{n' + 1} = \ldots = q^N = 0\}$, that    $A$ has the form \eqref{specialA} and  that  all vectors $B_\a$, $1 \leq \a \leq m$, are in the subspace $\Kalman^{(A, B)}$.  If this is the case, one can immediately see that  any solution of the linear system $(-A, B)$ that  starts from $0_{\bR^n}$  is constrained   to stay in  $ \Kalman^{(A,B)} $, so that $ \cO^{(-A,B)}_{\overline q = 0_{\bR^n}}= \cC^{(A,B)}_{\overline q = 0_{\bR^n}} \subset  \Kalman^{(A,B)} $ does not  contain any neighbourhood of the origin. \par
\smallskip
Second,  we point out that, in order to  prove the sufficiency part of the Kalman Theorem, there is  no loss of generality if we  assume that $\cK$ is not only a neighbourhood of $0_{\bR^n}$, but  is also 
convex and symmetric.  Indeed,      if we use the notation  $\cC^{(A,B)\text{\rm p.c.}}_{\overline q = 0_{\bR^n}}$   and   $\wt{\cC^{(A,B)\text{\rm p.c.}}}_{\overline q = 0_{\bR^n}}$ for   the sets of points that are controllable to $0_{\bR^m}$  by means of controls in   $\cK$ and of controls  in a   subset $\wt \cK$ of  $ \cK$, respectively, it is  clear that    $\wt{\cC^{(A,B)\text{\rm p.c.}}}_{\overline q = 0_{\bR^n}} \subset  \cC^{(A,B)\text{\rm p.c.}}_{\overline q = 0_{\bR^n}}$.   If in addition $\cK$  contains  a neighbourhood of $0_{\bR^m}$, then it  certainly  contains a  subset $\wt \cK$ which is not only a neighbourhood of $0_{\bR^m}$, but it also  convex and symmetric (a ball, for example).  Hence if we  prove   that the Kalman condition  is sufficient  for   $\wt{\cC^{(A,B)\text{\rm p.c.}}}_{\overline q = 0_{\bR^n}}$  to  contain a neighbourhood of the origin,  we  immediately  get  that the Kalman condition   a fortiori  is a sufficient condition for  $\cC^{(A,B)\text{\rm p.c.}}_{\overline q = 0_{\bR^n}}$ having  that  property.\par
\smallskip
Due  this observation,   the proof  of the sufficiency part of the Kalman Theorem can be made  under the stronger assumption that $\cK$ is open, convex,  symmetric and contains the origin.   
For such a proof we need the following 
\begin{prop} \label{theprop}  If $\cK \subset \bR^m$ is  open,  convex,  symmetric  and contains the origin, then:
\begin{itemize}[leftmargin = 20pt]
\item[(1)]  $\cC_{\overline q = 0}^{(A,B)p.c.}$  is  symmetric and convex; 
\item[(2)] If $L \subset \bR^n$ is  the smallest  (with respect to inclusion)   linear subspace containing  $\cC_{\overline q = 0}^{(A,B)p.c.}$,  
then $\dim L\geq 1$  and   $\cC_{\overline q = 0}^{(A,B)p.c.}$ contains a neighbourhood of $0$  of the   topology of  $L$ (considered as a topological subspace of $\bR^n$); 
\item[(3)]   $\dim L = \dim \orb^{\cF^{(A, B)}}(0) \geq   \dim  \Lie(\cF^{(A, B)})_{\overline q=0}$.
\end{itemize}
\end{prop}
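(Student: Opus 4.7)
The common tool for all three parts is the variation-of-constants formula, which says that $q_o \in \cC^{(A, B)\text{p.c.}}_{\overline q = 0}$ if and only if $q_o = -\int_0^T e^{-As} B u(s)\,ds$ for some $T \geq 0$ and some piecewise constant $u : [0,T] \to \cK$. For (1), the symmetry of $\cC$ is immediate from $u \mapsto -u$ combined with $\cK = -\cK$; the convexity follows by padding any two admissible controls with the null control to a common time horizon (which does not move the endpoint away from $\overline q = 0$) and then taking their convex combination, which stays in $\cK$ by convexity of $\cK$.

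For (2), I first exhibit a nonzero element of $\cC$ so that $\dim L \geq 1$: since $B \neq 0$, there is a column $B_a \neq 0$, and a constant control $\varepsilon e_a \in \cK$ of small duration $T > 0$ produces $q_o = -\varepsilon T B_a + O(T^2) \neq 0$. The remaining assertion is a standard convex-analytic fact: any convex subset of $\bR^n$ whose linear hull is $L$ has non-empty relative interior in $L$, and by symmetry of $\cC$ the origin itself lies in this relative interior.

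For (3), the inequality $\dim \orb^{\cF^{(A,B)}}(0) \geq \dim \Lie(\cF^{(A,B)})_{\overline q = 0}$ follows directly from Theorem \ref{Chow_Rashevsky}, and the equality $\dim L = \dim \orb^{\cF^{(A,B)}}(0)$ is established by two one-sided bounds. For $\dim L \leq \dim \orb^{\cF^{(A,B)}}(0)$, I combine (2) with the inclusion $\cC \subset \orb^{\cF^{(A,B)}}(0)$ proved in \S\ref{KalmantoChow}: since $\cC$ contains a relative neighborhood of $0$ in $L$, every $v \in L$ is the velocity at $t=0$ of the curve $t \mapsto tv$, which lies in $\cC \subset \orb^{\cF^{(A,B)}}(0)$ for small $|t|$, forcing $L \subset T_0 \orb^{\cF^{(A,B)}}(0)$. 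For the reverse bound I would expand the variation-of-constants formula as a power series in $A$, $q_o = -\sum_{k \geq 0} \tfrac{(-1)^k}{k!} A^k B \int_0^T s^k u(s)\,ds$, and then, for each $k \geq 0$ and each column $B_j$ of $B$, construct a piecewise constant control of small amplitude $\varepsilon e_j \in \cK$, split into $k+1$ alternating pieces and tuned by a Vandermonde-type calculation so that the moments $\int_0^T s^\ell u(s)\,ds$ vanish for $\ell < k$ and are nonzero for $\ell = k$; the resulting $q_o \in \cC$ is then proportional to $\varepsilon A^k B_j$ at leading order, and by the symmetry of $\cC$ so is its negative, yielding $A^k B_j \in L$ for every $k$ and $j$. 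Hence $\Kalman^{(A,B)} \subset L$, and combined with $\orb^{\cF^{(A,B)}}(0) \subset \Kalman^{(A,B)}$ from \S\ref{KalmantoChow} this gives $\dim \orb^{\cF^{(A,B)}}(0) \leq \dim \Kalman^{(A,B)} \leq \dim L$. The main obstacle is precisely this moment-cancellation step: it reduces to a Vandermonde-type linear algebra computation that must be executed while keeping the amplitudes of the $(k{+}1)$-phase control inside the prescribed neighborhood $\cK$ of $0_{\bR^m}$.
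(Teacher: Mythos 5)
Your proposal is correct, but part (3) takes a genuinely different route from the paper's. Parts (1) and (2) match the paper in substance: the paper likewise invokes the integral representation formula for (1), and for (2) it makes your ``standard convex-analytic fact'' explicit by taking the convex hull of $\{\pm q_1,\dots,\pm q_N\}$ for $N=\dim L$ linearly independent points of $\cC_{\overline q = 0}^{(A,B)p.c.}$. For (3), however, the paper computes nothing: it proves that every generator $X_\a\in\cF^{(A,B)}$ is tangent to $L$ at each point of $\cC_{\overline q = 0}^{(A,B)p.c.}$ --- for the constant fields by noting that non-tangency anywhere forces $X_{\a_o}|_0\notin L$, whence a trajectory of $\dot q=-Aq-\mu B_{\a_o}$ from $0$ would leave $L$ while staying in $\cC_{\overline q = 0}^{(A,B)p.c.}\subset L$; for $X_0$ by concatenating a controlled trajectory $0\to\overline q'$ with the uncontrolled flow through $\overline q'$ --- and then gets $\dim\orb^{\cF^{(A,B)}}(0)\le\dim L$ because orbits through the relatively open set $\cU\subset L$ stay locally in $\cU$, the remaining inequalities coming from Theorem \ref{Chow_Rashevsky} as in your argument. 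Your substitute --- killing the moments $\int_0^T s^\ell u\,ds$ for $\ell<k$ by a Vandermonde-type system, rescaling amplitudes into $\cK$ by linearity, and letting $T\to 0$ with $L$ closed to extract $A^kB_j\in L$ --- is executable (on $k+1$ equal subintervals the matrix $\big(\int_{I_i}s^\ell\,ds\big)$ is invertible, since a polynomial of degree $\le k$ with vanishing integral on $k+1$ disjoint intervals has $k+1$ roots; the tail is $O(T^{k+2})$ against $\mu_k\sim\ve T^{k+1}$), but note what it costs: it proves $\Kalman^{(A,B)}\subset L$, which together with (2) already yields the sufficiency half of the Kalman Theorem with no orbits at all --- i.e.\ it smuggles in the classical quantitative proof that the paper is deliberately avoiding, whereas the paper's soft tangency argument delegates the entire dimension count to Chow--Rashevski\u\i, which is the point of the section. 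If you keep your version, write out the $T\to0$ limit behind ``proportional at leading order,'' and in the bound $L\subset T_0\,\orb^{\cF^{(A,B)}}(0)$ justify differentiating the curve $t\mapsto tv$ \emph{into} the orbit (orbits are weakly embedded, or argue as the paper does via the orbit containing a relatively open subset of $L$).
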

\begin{proof} 
 (1)   is a well known property of linear system, which   can be  easily  checked by just considering  the classical  integral representation formula for the solutions to   \eqref{Thesystem-1} (see  e.g. \cite[\S II.2]{MS}).\par
In order to check  (2),  note that, since $B \neq 0$,  the  set  $\cC_{\overline q = 0}^{(A,B)p.c.}$ contains at least a point different from $0$. This implies that    $\dim L \geq 1$.  Denoting  $N = \dim L$, consider   $N$  points $q_1, \ldots, q_N \in \cC_{\overline q = 0}^{(A,B)p.c.}$  that are linearly independent (they exist because, otherwise,    $L$ would not be  the smallest linear space  containing $ \cC_{\overline q = 0}^{(A,B)p.c.}$) and let $C$ be  the convex hull of the set $\{ q_1, -q_1, q_2, -q_2, \ldots, q_N, - q_N\}$.  The interior of $C$ in the topology of $L$ is  an open neighbourhood of the origin and it  is contained in $\cC_{\overline q = 0}^{(A,B)p.c.}$ because of (1). \par
We now prove  (3). First of all we claim that each vector field $X_\a$,    defined in \eqref{211}, is  tangent to  $L$ at all  points of  $\cC^{(A, B)p.c.}_{\overline q = 0} $.
In order to check this, for what concerns a    vector field $X_{\a_o}$, $\a_o \neq 0$, suppose that the claim is not true, i.e.  that $X_{\a_o}|_{q} \notin L$ for some $q \in \cC^{(A, B)p.c.}_{\overline q = 0}$. Since $X_{\a_o}$ has constant components, this implies that also  $X_{\a_o}|_{\overline q = 0} \notin L$.  In this case,  consider   the differential problem   $\dot q^i = - A^i_j q^j - \mu B_{\a_o}$, $q(0) = 0$ for some small constant value $\mu>0$ and let  $q(t)$ be a solution to this problem, 
hence a curve, which is   tangent to the vector field  $X_0(q) - \mu  X_{\a_o}(q)$.  Note that, in the linear system $(-A,B)$, this corresponds to the choice of the constant control 
$\mu e_{\alpha_o}$ which, for small $\mu>0$, belongs to $\cK$ because it is a neighbourhood of the origin in $\bR^m$, being $e_{\alpha_o}$ the suitable element of its canonical basis.
Since  $X_{\a_o}|_{\overline q = 0} \notin L$ and  $X_0|_{\overline q = 0} = 0$,  for small values  of $t$ the point  $q(t)$   is not in $L$.  This is however impossible because  all points of such a solution  $q(t)$  are in $\cO^{(-A, -B)p.c.}_{\overline q = 0} = \cC^{(A,B)p.c.}_{\overline q = 0} \subset  L$. For what concerns the vector fields $X_0$ (which has not constant components), the proof is  different. As before, assume that the claim does not hold for $X_0$, i.e.  that there is a  $\overline q'  \in \cC^{(A,B)p.c.}_{\overline q = 0}  $ such that $X_0|_{\overline q'}$ is not tangent to $L$.  We may therefore consider 
\begin{itemize}[leftmargin = 20pt]
\item[(i)]  A solution $q(t)$ of a differential problem  $\dot q = - A q - B u(t)$, $q(0) = 0$,  that ends at $\overline q'$  and  is determined by a piecewise constant control $u(t)$ (it exists because  $\overline q'  \in  \cC^{(A,B)p.c.}_{\overline q = 0}   =  \cO^{(-A, -B)p.c.}_{\overline q = 0}$); 
\item[(ii)] A  solution $q(t)$ to the differential problem $\dot q = - A q$, $q(0) = \overline q'$ (such a  solution is an integral curve of the vector field $X_0$ and, since $X_0|_{\overline q'} \notin L$,  the points $q(t)$ are not  in $L$ for small values of  $t$; and again, it is an admissible curve for the controlled system $(-A,B)$ because $0\in\cK$).
\end{itemize}
 Composing  the  curves in (i) and (ii), we get a  curve  which is entirely included in    $ \cO^{(-A, -B)p.c.}_{\overline q = 0} = \cC^{(A,B)p.c.}_{\overline q = 0} $, but also which is not entirely included in $L$ contradicting the assumption  $\cC^{(A,B)p.c.}_{\overline q = 0}  \subset  L$.\par
 \smallskip
From  the argument above,   we see that all vector fields in the set $\cF^{(A, B)}$ and, consequently,  all vector fields  in the spanned vector space  $\wh{\cF^{(A, B)}}$ are tangent to $L$ at the points of an open  (in the topology of $L$) neighbourhood $\cU \subset L$ of $0$,   which is entirely contained  in $\cC^{(A,B)p.c.}_{\overline q = 0} $ (it exists by (2)).Thus  any integral curve $\sigma^X(s)$ of a vector field  $X$ in  $\wh{\cF^{(A, B)}}$  that starts from a point in $\cU$ stays in $\cU$ for small values of $s$.
Consequently,  for any $q \in \cU \subset  \cC^{(A,B)p.c.}_{\overline q = 0} $,  there exists an open neighbourhood of $q$ of the intrinsic topology of  $\orb^{\cF^{(A, B)}}(q)  $ which  is included in  $\cU \subset L$.   By  Theorem \ref{Chow_Rashevsky},  $\orb^{\cF^{(A, B)}}(\overline q_o = 0)$ is  an immersed submanifold of $\bR^n$ of dimension greater than or equal to  $\dim \Lie(\cF^{(A, B)}_{\overline q_o = 0})$. Thus: 
\begin{itemize}[leftmargin = 20pt]
\item   $\dim  \orb^{\cF^{(A, B)}}( q_o = 0) \leq \dim L$ because  $\orb^{\cF^{(A, B)}}( q_o = 0)$ contains an open subset of its intrinsic topology included   in $\cU \subset L $; 
\item   $\dim  \orb^{\cF^{(A, B)}}( q_o = 0) \geq \dim L$ because $\orb^\cF( q_o = 0)$  contains  $\cC^{(A,B)p.c.}_{\overline q = 0}$ which contains an open subset of  $L$. 
\end{itemize}
This  implies that $\dim L = \dim \orb^{\cF^{(A, B)}}(\overline q_o = 0) \geq \dim  \Lie(\cF^{(A,B)})_{\overline q_o = 0}$.
\end{proof}
We have now all ingredients to conclude the proof of the Kalman Theorem described at the beginning of this section.  By
 \eqref{Lielie} and the remarks just before that equality,   we know that  
$ \dim \Kalman^{(A,B)} = n$ if and only if  $  \dim  \Lie(\cF^{(A,B)})_{\overline q_o = 0} = n$. Thus, by  Proposition \ref{theprop} (3),   the linear subspace $L \subset \bR^n$ of claim  (2)  of that proposition    is equal to $\bR^n$ and  the set  $\cC^{(A,B)p.c.}_{\overline q = 0}$ is   a symmetric,  convex neighbourhood of $\overline q=0$ in $\bR^n$. In particular it is  an open subset of   $L = \bR^n$.
 Thus {\it if the Kalman condition holds,  then    $\cC^{(A,B)p.c.}_{\overline q = 0}$ contains a neighbourhood of $0_{\bR^n}$} as we needed to prove. \par
\begin{rem}
We  point out that a generic  problem of controllability is a ``forward-in-time problem'' in the following sense:  It consists of the query of  points  that can be considered as starting points for the  controlled evolutions that  reach a desired destination in  a  finite  time {\it and moving just forward in time}. On the other hand,  the points   of  an orbit of $\cF^{(A, B)}$, as  considered in the Chow-Rashevski\u\i\ Theorem,   are points, which can be   reached moving {\it both forward and backward in time} along the evolutions. This is the crucial difference between the two problems. In the above proof, the argument  where the``moving-just-forward-in-time'' constraint on the evolutions  (i.e. the main difference  between the  controllability problems and the  Chow-Rashevski\u\i\  type problems)  is shown to be non effective for deriving the Kalman Theorem  is   when we  prove that all the vector fields $X_\alpha$ (and in particular, the vector field  $X_0$)  are tangent to $L$.  Indeed this is equivalent to the  fact that $C_{\overline q=0}^{(A,B)p.c.}$ is a neighbourhood of $0$ if and only if $C_{\overline q=0}^{(-A,B)p.c.}$ is so and  hence   if and only if  $C_{\overline q=0}^{(\pm A,B)p.c.}$ is so as well. Note that $C_{\overline q=0}^{(\pm A,B)p.c.}$ is precisely the set of the controllable points of the problem given by the controlled dynamics $\dot q=\sigma Aq+Bu$ with  controlling parameter  $u = (u^i)$ together with  the additional discrete parameter  $\sigma\in\{-1,1\}$.
\end{rem}

\par
\medskip
 \section{Another  proof of  the Kalman Theorem,
 based on  general facts on   flows   of  vector fields}
 \label{sect5}
We now give our second proof of  the Kalman Theorem or, more precisely, only of the sufficiency part of that theorem. In fact we already pointed out that   the necessity part   is just  a consequence of  
a  linear algebra argument,  namely the same that proves the necessity part in the corollary  of the Chow-Rashevski\u\i\ Theorem that we are considering  in this paper. \par
This second proof is based on a translation of the control problem given  by a linear system  $(A, B)$ into a corresponding  equivalent problem on certain  distinguished curves and  flows of certain vector fields in the extended space-time $\cM = \bR \times \cQ \times \cK$. We introduce these special curves and vector fields  and the above described translation in the next  subsections \S \ref{section6.1} and \S \ref{section6.2}. The actual proof is  in the concluding subsection  \S \ref{section6.3}.\par
   \subsection{Stepped  graphs in the extended space-time}\label{section6.1} Assume that $\cK \subset \bR^m$ is an open neighborhood of the origin.
   Consider now the manifold  $\cM =\bR \times \cQ \times \cK =  \bR^{1 + n } \times \cK$ (which is an open subset of  $\bR^{1  +  n + m}$)  and denote by $t$, $q = (q^i)$ and $u = (u^a)$ the standard cartesian coordinates of $\bR$, $\cQ$ and $\cK \subset \bR^m$, respectively, so that   each   $x \in \cM$ is   determined by a  tuple $(t, q^i, u^a)$.  As we mentioned in the Introduction, we name $\cM$ the {\it extended space-time} associated with  the   system  \eqref{Thesystem-1} and we denote by 
   $$\pi^\bR : \cM =\bR \times \cQ \times \cK \to \bR\ ,\ \  \pi^\cQ: \cM =\bR \times \cQ \times \cK\to \cQ\ ,\ \  \pi^\cK:  \cM =\bR \times \cQ \times \cK \to \cK$$
   the three standard projections of  the cartesian product $\cM$ onto  its three factors. 
   \par
   A {\it stepped (completed)  graph of a solution of \eqref{Thesystem-1}} is a piecewise regular  oriented curve  (see \S  \ref{2_3})
   $\h = \h_1 \ast \h_2 \ast  \ldots\ast \h_{2 r + 1}$
    in  $\cM $,  obtained by composing   an odd number  of regular arcs and satisfying the following three conditions (Fig.1):
\begin{itemize}[leftmargin = 15pt]
\item[(1)]  Each {\it odd}  arc $\h_{2 k + 1}$, $0 \leq k \leq r$,  is 
the graph 
$$\h_{2 k + 1} = \{\ (t, q_k(t), u_k(t))\ ,\ t \in [t_k, t_k + \s_k]\ \}$$
of a   map   $t \longmapsto (q_k(t), u_k(t)) \in \cQ \times \cK$, $t \in [t_k, t_k +\s_k]$, in  which $u_k(t) \equiv c_{k }$ is constant and  $q_k(t)$ is the corresponding solution  to  \eqref{Thesystem-1}; we denote its    initial point by   $\wt q_{k } = q_k(t_k)$; 
\item[(2)]  For any odd regular arc $\h_{2k + 1}$ with $k \geq 1$, the  initial condition  $\wt q_k$ of  $q_k(t)$ is equal to   the final point  $\wh q_{k-1} \= q_{k -1}(t_{k - 1} +\s_{k-1})$   of the  solution  $q_{k-1}(t)$; 
\item[(3)]   Each {\it even} regular arc $\h_{2(k + 1)}$, $0 \leq k \leq r-1$,  is the  segment in $\bR \times \cQ \times \cK$ that  joins the final point of    $\h_{2 k + 1}$  to the initial  point of    $\h_{2 (k +1) + 1}$. 
\end{itemize}

%\centerline{ \includegraphics[width=10cm]{Stepped_graph.pdf}}

\centerline{
    \begin{tikzpicture}[scale=0.7]
		\node (0) at (0, 0) {};
		\node (1) at (0, 7) {};
		\node (2) at (-3, -3) {};
		\node (3) at (3, 3) {};
		\node (4) at (10, 0) {};
		\node (5) at (-1, -2.5) {};
		\node (6) at (1.25, -1.5) {};
		\node (7) at (1.25, -0.25) {};
		\node (8) at (3.5, 0.5) {};
		\node (9) at (7.25, 2.75) {};
		\node (10) at (9.5, 4.75) {};
		\node (11) at (3.5, 1.5) {};
		\node (12) at (7.25, 3.75) {};
  \fill(5)circle(3 pt);
  \fill(6)circle(3 pt);
  \fill(7)circle(3 pt);
  \fill(8)circle(3 pt);
  \fill(9)circle(3 pt);
 \fill(10)circle(3 pt);
\fill(11)circle(3 pt);
  \fill(12)circle(3 pt);
		\node (13) at (5, 2) {};
		\node (14) at (8, 4) {};
		\node (15) at (9.5, -0.5) {\(t\)};
		\node (16) at (0.5, 6.5) {\(w\)};
		\node (17) at (-3, -2.25) {\(q\)};
		\node (18) at (-1, -3) {\((t_0,q_0,w_0)\)};
\draw (3.center) to (2.center)[->]; % mettere [<-] per cambiare verso a q
		\draw (0.center) to (1.center)[->];
		\draw (0.center) to (4.center)[->];
		\draw [in=180, out=45, looseness=0.75] (5.center) to (6.center);
		\draw [in=-135, out=45, looseness=0.75] (7.center) to (8.center);
		\draw [in=-135, out=45, looseness=0.75] (11.center) to (13.center);
		\draw [in=-180, out=45] (13.center) to (9.center);
		\draw [in=150, out=60, looseness=1.25] (12.center) to (14.center);
		\draw [in=-135, out=-30] (14.center) to (10.center);
		\draw (6.center) to (7.center)[red, densely dotted, very thick];
		\draw (8.center) to (11.center)[red, densely dotted, very thick];
		\draw (9.center) to (12.center)[red, densely dotted, very thick];
\end{tikzpicture}
}
\centerline{\tiny \bf Fig. 1 -- A stepped graph in $\cM = \bR \times \cQ \times \cK$}
Note that  any   stepped  graph $\h = \h_1 \ast \h_2 \ast  \ldots\ast \h_{2 r + 1}$  of a solution satisfies the following: 
\begin{itemize}[leftmargin = 15pt]
\item The  projection on $\bR \times \cQ$  of each  even  arc $\h_{2 (k+1)}$   is a  singleton,  i.e.  the set which contains just   the point given by  the projection onto $\bR \times \cQ$ of the final point of $\h_{2 k + 1}$ (which is also the initial point of $\h_{2 k + 3})$
  $$ \big\{(t_{k-1} + \s_{k-1},  \wh q_{k-1} )\big\} = \big\{(t_k, \wt q_k)\big\}\ ; $$ 
\item The projection of   $\h$ onto $\bR \times \cQ$   coincides with  the graph  of the solution $t \to q(t)$ to  \eqref{Thesystem-1} with initial condition $ \wt q_0$ and     the piecewise constant  control curve $u(t)$,  which is   equal to   the constants $c_0, c_1, \ldots, c_r$ along   the time  intervals  $(t_k, t_k + \s_k) \subset [0, T]$ (Fig.2); 
\centerline{
    \begin{tikzpicture}[scale=0.7]
		\node  (0) at (0, 0) {};
		\node  (2) at (0, -3) {};
		\node  (3) at (0, 3) {};
		\node  (4) at (12.5, 0) {};
		\node  (5) at (1, -1.5) {};
		\node  (6) at (3.25, -0.5) {};
		\node  (7) at (3.25, -0.5) {};
		\node  (8) at (5.5, 0.25) {};
		\node  (9) at (9.25, 1.5) {};
		\node  (10) at (11.5, 2.5) {};
		\node  (11) at (5.5, 0.25) {};
		\node  (12) at (9.25, 1.5) {};
  \fill(5)circle(3 pt);
  \fill(6)circle(3 pt);
  \fill(7)circle(3 pt);
  \fill(8)circle(3 pt);
  \fill(9)circle(3 pt);
  \fill(10)circle(3 pt);
  \fill(11)circle(3 pt);
  \fill(12)circle(3 pt);
		\node  (13) at (7, 0.75) {};
		\node  (14) at (10, 1.75) {};
		\node  (15) at (12.25, -0.5) {\(t\)};
		\node  (17) at (0.5, 2.75) {\(q\)};
		\node  (18) at (2, -1.75) {\((t_0,q_0)\)};
		\draw (3.center) to (2.center)[<-];
		\draw (0.center) to (4.center)[->];
		\draw [in=180, out=45, looseness=0.75] (5.center) to (6.center);
		\draw [in=-135, out=45, looseness=0.75] (7.center) to (8.center);
		\draw [in=-135, out=45, looseness=0.75] (11.center) to (13.center);
		\draw [in=-180, out=45] (13.center) to (9.center);
		\draw [in=150, out=60, looseness=1.25] (12.center) to (14.center);
		\draw [in=-135, out=-30] (14.center) to (10.center);
\end{tikzpicture}
}
\centerline{\tiny \bf  Fig. 2 -- The graph of a solution in $\bR \times \cQ$ corresponding to  a stepped graph}
\item $\h$ is  determined by its odd regular arcs (the even arcs  are  just segments and  are completely determined  by  the endpoints of the odd arcs). 
\end{itemize}
Conversely, if  $u: [0, T] \to K \subset \bR^m $ is a  piecewise constant control curve   in  $ \cK \subset \bR^m$ and takes  constant values   $c_0, c_1, \ldots, c_r \in \cK$ on the  
subintervals $$[0, t_1)\ , \ (t_1, t_2)\ , \ldots\ , \ (t_r, T] \qquad \text{with} \qquad [0, T] = \bigcup_{j = 0}^r [t_j, t_{j+1}]$$ and denoting by $q(t)$, $t \in [0, T]$,  the  solution to \eqref{Thesystem-1} corresponding to $u(t)$ and the initial value $q(0) = \wt q_0$,  there is just one stepped  graph $\h = \h_1 \ast \h_2 \ast  \ldots\ast \h_{2 r + 1}$ in $ \cM$,  whose odd regular arcs  $\h_{2 k + 1}$ are  determined   by the restrictions  $q|_{[(t_k, t_{k+1}]}$, $0 \leq k \leq r$. The  projection on $\bR \times \cQ$ of $\h$  is   the graph $\{(t, q(t))\ ,\ t \in [0, T]\} \subset \bR \times \cQ$ of such a solution.  \par
\medskip 
Given a point $\overline x = (\overline t, \overline q, \overline w) \in \cM$,  the  {\it $\cM$-attainable set   of $\overline x$ in the  time $T$} is the  set  
 \begin{multline} \Attain^{(T)}_{\overline x} {=} \bigg\{ y \in \cM\,:\, y\ \text{is the final point of  a stepped  graph}\ \h \\
 \text{that  starts from  $\overline x$ and is  such that}\   \pi^\bR(\h) =  [0, T] \bigg\}\ .
 \end{multline}
 From the previous discussion and \eqref{3_6}, it follows  that 
 \beq\label{the53}  \cC_{\overline q }^{(A,B)p.c.}   =  \cO_{\overline q}^{(\bA,\bB)p.c.} =\bigcup_{T> 0, \overline w \in \cK} \pi^{\cQ}\left( \Attain^{(T)}_{\overline x \= (0, \overline q, \overline w)}\right)\ .\eeq
\begin{rem}\label{remark}
   Due to the above relation,  whenever there exists a point  $\overline w \in \cK$ and a time $T> 0$, such that  $ \Attain^{(T)}_{\overline x \= (0, 0, \overline w)}$  projects onto a neighbourhood of $0_{\bR^n}$  via $\pi^{\cQ}$,  it follows immediately  that  $   \cC_{\overline q }^{(A,B)p.c.}   = \cO_{\overline q}^{(\bA,\bB)p.c.}$ contains  a neighbourhood of the origin. 
   \end{rem}
 Our second proof of the sufficiency part of the  Kalman Theorem consists precisely in exploiting  Remark \ref{remark}, namely in showing that the Kalman condition   implies that, for    $\overline w  = 0$,  the projection  of $ \Attain^{(T)}_{\overline x \= (0, 0, 0)}$ onto  $\cQ$  has the desired property for any choice of $T> 0$.  \par
 \medskip 
 \subsection{Final points of stepped graphs as points in   orbits of  flows of vector fields} \label{section6.2} Consider  a stepped graph $\h = \h_1 \ast \h_2 \ast  \ldots\ast \h_{2 r + 1}$ in $\cM = \bR \times \cQ \times \cK$, corresponding to a solution to \eqref{Thesystem-1}  for a piecewise constant control $u: [0, T] \to \cK$ with values $c_k \in \cK$, $0 \leq k \leq r$, on  time intervals  $[t_k, t_k + \s_k]$, with $t_0 = 0$ and $t_k \= t_{k -1} + \s_{k -1}$. Let also $\overline x$ and $\overline y$ be the initial and final points of $\h$. \par
 \smallskip
 We  observe   that   each   odd arc  $\h_{2 k +1}$, $0 \leq k \leq r$, 
 is 
 an integral curve of the vector field 
$$\bT = \frac{\p}{\p t} + \left(\bA^i_j q^j  + \bB^i_a u^a\right) \frac{\p}{\p q^i} =   \frac{\p}{\p t} - \left(A^i_j q^j  + B^i_a u^a\right) \frac{\p}{\p q^i}  \ ,$$
while each even  arc $\h_{2 (k +1)}$ (that is,  each of the  segments  that join the points $(t_{k +1}, \wh q_k, c_k)$ to   $(t_{k+1}, \wt q_{k+1} = \wh q_k, c_{k+1})$) are   integral curves,   with integration parameter running in $ [0,1]$, 
of  the vector fields  
$$\l_{(k)}^a  \frac{\p}{\p u^a}\ , \qquad \l_{(k)}^a \= c_{k}^a - c_{k-1}^a\ , \qquad 1 \leq k \leq r\ . $$
Setting   $W_a \= \frac{\p}{\p u^a}$,  we conclude that  the final point $\overline y$  is  equal to 
\beq  \label{compos} \overline y = \Phi^\bT_{\s_r} \circ \Phi_{1}^{\l_{(r)}^a W_a} \circ \Phi^\bT_{\s_{r}} \circ  \ldots\circ   \Phi_{1}^{\l_{(2)}^a W_a} \circ \Phi^\bT_{\s_1} \circ \Phi_{1}^{\l_{(1)}^a W_a}  \circ \Phi^\bT_{\s_0}(\overline x)\ .\eeq
The next lemma  shows that such   expression  is equivalent  to another, much  more convenient for our purposes.
In its statement, for any  $s\in \bR$ and  any  vector field $X $ on $\cM = \bR \times \cQ \times \cK$,   we adopt the notation $X^{[s]}$ to indicate the  vector field  defined by 
$$X^{[s]} = \Phi^\bT_{s*}(X)\ , $$
that is the push-forward of $X$ by the diffeomorphism $ \Phi^\bT_s$.
 We recall that such a pushed-forward vector field   $X^{[s]}$ is  the  unique vector field whose  integral curves  are     the curves  in $\cM$ defined by 
 $$\g(t) = \Phi^\bT_s \circ \Phi^X_t \circ \Phi^\bT_{-s}(y)\ , \qquad y \in \cM\qquad (\  \text{recall the relation \eqref{useful}} \ ) \ .$$
\begin{lem} \label{lemma51}  Setting
 $\t_k  \=   \sum_{j = k}^r \s_j$,   $1 \leq k \leq r$,    the   point  $\overline y$ in \eqref{compos}  is also  equal to  the point
\beq  \label{compos1} \overline y = \big( \Phi_{1}^{\l_{(r)}^a W^{[ \t_r]}_a} \circ  \Phi_{1}^{\l_{(r-1)}^a W^{[  \t_{r -1}]}_a} \circ 
 \ldots \ \circ \Phi_{1}^{\l_{(1)}^a W^{[	 \t_1]}_a} \big) (\overline x') \ \  \text{where}\  \overline x' \= \Phi^\bT_{T}(\overline x)\ .
\eeq
In particular,  if   $\overline x = (0,\overline q =  0, \overline w = 0)$,
\beq  \label{compos2} \overline y = \big( \Phi_{1}^{\l_{(r)}^a W^{[ \t_r]}_a} \circ  \Phi_{1}^{\l_{(r-1)}^a W^{[ \t_{r -1}]}_a} \circ 
 \ldots \ \circ \Phi_{1}^{\l_{(1)}^a W^{[\t_1]}_a} \big) (\overline x)\ .
\eeq
\end{lem}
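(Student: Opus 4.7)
The plan is to derive \eqref{compos1} from \eqref{compos} by iterating the commutation identity \eqref{useful} with $X = \bT$, so as to push every $\bT$-flow past all the $W$-flows standing to its right and let them fuse into a single rightmost factor. Specializing \eqref{useful} to $Y = \l_{(k)}^a W_a$ and using linearity of push-forward in the vector field gives
$$\Phi^\bT_s \circ \Phi^{\l_{(k)}^a W_a}_1 = \Phi^{\l_{(k)}^a W_a^{[s]}}_1 \circ \Phi^\bT_s,$$
which is the only non-trivial manipulation used in the proof.

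Starting from the leftmost factor of \eqref{compos}, push $\Phi^\bT_{\s_r}$ through $\Phi^{\l_{(r)}^a W_a}_1$ to convert it into $\Phi^{\l_{(r)}^a W_a^{[\s_r]}}_1$; the migrated $\Phi^\bT_{\s_r}$ then meets $\Phi^\bT_{\s_{r-1}}$ and merges into $\Phi^\bT_{\s_r + \s_{r-1}}$ by the one-parameter group property of $\{\Phi^\bT_s\}$. Iterating: at the $k$-th stage the accumulated $\bT$-flow arriving at $\Phi^{\l_{(k)}^a W_a}_1$ has parameter $\s_r + \ldots + \s_k = \t_k$, so it produces $\Phi^{\l_{(k)}^a W_a^{[\t_k]}}_1$ on its left and leaves $\Phi^\bT_{\t_k}$ on its right, which then absorbs $\Phi^\bT_{\s_{k-1}}$ into $\Phi^\bT_{\t_{k-1}}$. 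A downward induction on $k$ from $r$ to $1$ yields \eqref{compos1}, the final residual $\bT$-flow carrying parameter $\t_1 + \s_0 = T$.

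For \eqref{compos2}, start from \eqref{compos1} with $\overline x = (0, 0, 0)$. The zero control makes $q \equiv 0$ the solution of $\dot q = \bA q + \bB \cdot 0$ with $q(0) = 0$, and $\bT$ has no $\p/\p u$ component, so $\Phi^\bT_T(\overline x) = (T, 0, 0)$. A direct computation from \eqref{uno} (using that $W_a = \p/\p u^a$ has no $\p/\p t$ component, and that $\Phi^\bT_s$ acts on $t$ by pure translation and on $u$ as the identity) shows that each $W_a^{[\t_k]}$ has vanishing $\p/\p t$ component and that its $q$- and $u$-coefficients depend only on the values of $q$ and $u$, not on $t$. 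Consequently the $(q, u)$-part of the iterated composition of $W_a^{[\t_k]}$-flows is independent of the input's $t$-coordinate, so its evaluation at $(T, 0, 0)$ and at $(0, 0, 0)$ agree in the $(q, u)$-components, which combined with the fact that the final point $\overline y$ of the stepped graph has $t$-coordinate $T$ by construction delivers \eqref{compos2}.

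The only mild obstacle is keeping the indices straight during the iterated commutation; no deeper difficulty appears, since the whole argument reduces to the single identity \eqref{useful} applied repeatedly, together with the one-parameter group property of the flow of $\bT$.
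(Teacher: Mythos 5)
Your proof is correct and takes essentially the same route as the paper: the paper likewise obtains \eqref{compos1} by iterating \eqref{useful} in the form $\Phi^\bT_{s}\circ\Phi^X_t=\Phi^{X^{[s]}}_t\circ\Phi^\bT_{s}$, letting the accumulated $\bT$-flows merge by the group property until the residual factor is $\Phi^\bT_{\s_0+\cdots+\s_r}=\Phi^\bT_T$. For \eqref{compos2} the paper merely invokes the fact that ``$\Phi^\bT_t(0)=0$'', which is loose in the $t$-coordinate (in fact $\Phi^\bT_t(0,0,0)=(t,0,0)$, so the right-hand side of \eqref{compos2} has $t$-coordinate $0$ while $\overline y$ has $t$-coordinate $T$); your more explicit argument via the vanishing $\p/\p t$-component and $t$-independence of the $W^{[s]}_a$ handles the same point more carefully, the residual $t$-discrepancy being harmless since only the $\pi^\cQ$-projection is used afterwards.
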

 \centerline{
\begin{tikzpicture}
%\draw[help lines] (1,0) grid (13,5);
%%%%%%%%%%%%%%%%%%%%%%%%%%%%
% Coordinate axes
%%%%%%%%%%%%%%%%%%%%%%%%%%%%
\draw[<->, line width = 0.7] (4,5) to (4,2.5) to (12.5,2.5);
\draw[->, line width = 0.7]  (4,2.5) to  (2,0.5);
\node at  (12, 2.3) { \tiny$t$};
\node at  (3.8, 4.8) {\tiny $w$};
\node at  (2.3, 0.5) {\tiny $q$};
%%%%%%%%%%%%%%%%%%%%%%%%%%%%%
%%%%%%%%%%%%%%%%%%%%%%%%%%%%%
%% Projection onto the w-plane
%%%%%%%%%%%%%%%%%%%%%%%%%%%%%%
%\draw [line width = 0.8, blue, densely dotted](4.7, 2.6)  to  (5.2, 2.6)    ; 
%\draw [line width = 0.7, blue] (5.2, 2.6)  to (5.2, 3.3)    ; 
%\draw [line width = 0.8, blue, densely dotted](5.2,3.3)  to  (6.8, 3.3)   ; 
%\draw [line width = 0.7, blue](6.8, 3.3)  to  (6.8, 4.3)   ; 
%\draw [line width = 0.8, blue, densely dotted](6.8, 4.3)  to  (11.5, 4.3)   ; 
%\draw[fill, blue]  (4.7, 2.6) circle [radius = 0.05];
%\draw[fill, blue]  (5.2, 2.6) circle [radius = 0.05];
%\draw[fill, blue]  (5.2, 3.3) circle [radius = 0.05];
%\draw[fill, blue]  (6.8, 3.3) circle [radius = 0.05];
%\draw[fill, blue]  (6.8, 4.3) circle [radius = 0.05];
%\draw[fill, blue]  (11.5, 4.3) circle [radius = 0.05];
%\node at  (7, 4.7) { \tiny \color{blue} {\bf Projection} of the  stepped  $\cD$-path along  directions of $\cD^I$}; 
% \node at (7,4.5)  {\tiny \color{blue} ($\simeq$ stepped control curve)};
%%%%%%%%%%%%%%%%%%%%%%%%%%%%%%%%%%%
%%%%%%%%%%%%%%%%%%%%%%%%%%%%%%
% Original integral curve of the field $\bT$
\begin{scope}[very thick,decoration={
    markings,
    mark={at position 0.3 with {\arrow{latex}}}} ] 
\draw [line width = 1, purple, densely dotted, postaction={decorate} ] (3.2,0.8)  to  [out=-5, in=200] (4, 1.4)   to [out=20, in=200] (9.7, 1.6) ; 
\end{scope}
\begin{scope}[very thick,decoration={
    markings,
    mark={at position 0.5 with {\arrow{latex}}}} ] 
\draw [line width = 1, purple, densely dotted, postaction={decorate} ] (3.2,0.8)  to  [out=-5, in=200] (4, 1.4)   to [out=20, in=200] (9.7, 1.6) ; 
\end{scope}
\begin{scope}[very thick,decoration={
    markings,
    mark={at position 0.7 with {\arrow{latex}}}} ] 
\draw [line width = 1, purple, densely dotted, postaction={decorate} ] (3.2,0.8)  to  [out=-5, in=200] (4, 1.4)   to [out=20, in=200] (9.7, 1.6) ; 
\end{scope}
\node at  (7.5, 1.1) {\color{purple} \tiny  \bf   Curve $s \mapsto \Phi^\bT_s(\overline x)$, $s \in [0, \s_1 + \ldots + \s_r] =  [0, T]$%$ \h_r(t) = (t, q(t), w(t))$
};
%\draw[->, line width = 0.5, purple] (5.5, 1.2) to [out = 90, in= 270] (7, 1.45);
%%%%%%%%%%%%%%%%%%%%%%%%%%
%%%%%%%%%%%%%%%%%%%%%%%%%%%
% Surrogate
\draw [line width = 1, black]  (9.7, 1.6)   [out=80, in=220] to  (9.9, 2.4)  [out=100, in=280] to     [out=90, in=260]  (10.5, 3.4)   ; 
\draw[fill, black]  (9.7, 1.6) circle [radius = 0.05];
\draw[fill, black]  (9.9, 2.4) circle [radius = 0.05];
\draw[fill, black]  (10.5, 3.4) circle [radius = 0.05];
\node at  (9.9, 1.5) {\color{black} \tiny $\overline x'$};
%\node at  (11, 2.7) { \tiny  {\bf $\bT$-surrogate} };
% \draw[->, line width = 0.6, black] (10.8, 2.60) to [out = 270, in= 0] (10, 2.3);
%\node at (11.6, 2.45) {\tiny of the  stepped $\cD$-path};
%%%%%%%%%%%%%%%%%%%%%%%%%%%%%
%%%%%%%%%%%%%%%%%%%%%%%%%%%%
%  FInal point of the stepped $\cD$-path and of the surrogate
\node at  (10.7, 3.4) {\color{black} \tiny $\overline y$ };
%%%%%%%%%%%%%%%%%%%%%%%%%
%%%%%%%%%%%%%%%%%%%%%%%%%%
% Stepped $\cD$-path
\begin{scope}[very thick,decoration={
    markings,
    mark={at position 0.6 with {\arrow{latex}}}} ] 
\draw [line width = 0.7, purple, postaction={decorate} ](3.2,0.8)  to  [out=-5, in=200] (4, 1.4)   ; 
\draw [line width = 0.7, red, postaction={decorate}](4,1.4)  to  [out=90, in=270]  (4, 2.35)   ;
\draw [line width = 0.7, purple, postaction={decorate} ](4,2.35)  to   [out=10, in=190] (6, 2.45)   ; 
\draw [line width = 0.7, red, postaction={decorate}](6,2.45)  to   [out=90, in=270] (6, 3.45)   ;
\draw [line width = 0.7, purple, postaction={decorate}](6,3.45)  to   [out=10, in=190] (10.5, 3.4)   ; 
\end{scope}
\begin{scope}[very thick,decoration={
    markings,
    mark={at position 0.3 with {\arrow{latex}}}} ] 
\draw [line width = 0.7, purple, postaction={decorate}](6,3.45)  to   [out=10, in=190] (10.5, 3.4)   ; 
\end{scope}
\draw[fill, purple]  (4, 1.4) circle [radius = 0.05];
\draw[fill, purple]  (4, 2.35) circle [radius = 0.05];
\draw[fill, purple]  (6, 2.45) circle [radius = 0.05];
\draw[fill, purple]  (6, 3.45) circle [radius = 0.05];
%\node at  (7.2, 2.7) { \tiny \color{purple} {\bf Stepped $\bT$-path}};
% \draw[->, line width = 0.6, purple] (7.2, 2.8) to [out = 90, in= 0] (6.1, 3.05);
%%%%%%%%%%%%%%%%%%%%%%%%%%%%%%%%
%%%%%%%%%%%%%%%%%%%%%%%%%%%%%%%%%
% Initial point of the stepped $\cD$-path
\draw[fill, purple]  (3.2, 0.8) circle [radius = 0.05];
\node at  (3.35, 0.66) {\color{purple} \tiny $\overline x$};
%%%%%%%%%%%%%%%%%%%%%%%%%%
%%%%%%%%%%%%%%%%%%%%%%%%%%
% Captions
%\node at (5, 0) {\color{purple} \tiny \it Curves $s \mapsto \Phi^\bT_s(x)$};
%\draw[->, line width = 0.3, purple] (5, 0.1) to [out = 90, in= 270] (9, 1.35);
%\draw[->, line width = 0.3, purple] (5, 0.1) to [out = 90, in= 270] (5.5, 2.3);
%\draw[->, line width = 0.3, purple] (5, 0.1) to [out = 90, in= 270] (8, 3.44);
\node at (1, 3.5) {\color{red} \tiny \it  Curves $s \mapsto \Phi_{s}^{\l_{(j)}^a W_a} $};
\draw[->, line width = 0.3, red] (1.1, 3) to [out = 0, in= 180] (4.1, 2);
\draw[->, line width = 0.3, red] (1.1, 3) to [out = 0, in= 170] (5.8, 3);
\node at (11, 0) {\color{black} \tiny \it Curves $s \mapsto \Phi_{s}^{\l_{(j)}^a W^{[\t_j]}_a} $};
\draw[->, line width = 0.3, black] (11, 0.1) to [out = 90, in= 0] (10, 2);
\draw[->, line width = 0.3, black] (11, 0.1) to [out = 90, in= 0] (10.5, 3);
%\draw[->, line width = 0.3, blue] (1.1, 3) to [out = 0, in= 180] (8, 3.44);
 \end{tikzpicture}
 }
 \centerline{\tiny \bf Fig. 3 --  The black curve from $\overline x'$ to $\overline y$ is obtained  changing the  }
  \centerline{\tiny \bf   composition of flows  \eqref{compos}  into a new one by an iterated  application  of   \eqref{iteratively}.}
%\textcolor{blue}{Riferimento a disegnino o uno nuovo?}\\
\begin{pf}   By the identity \eqref{useful},  any   composition  of  flows of  the   form $\Phi^\bT_{s} \circ \Phi_{t}^{X}$ can be replaced by   
\beq \label{iteratively} \Phi^\bT_{s}  \circ \Phi^X_t  = \Phi^{\Phi^\bT_{s*} (X)}_t \circ \Phi^\bT_{s}  = \Phi^{X^{[s]}}_t \circ \Phi^\bT_{s}\ .\eeq
Applying this identity iteratively,  after a finite number of steps  formula  \eqref{compos} takes the form     (see Fig.3) 
\begin{multline}   \overline y = \left( \Phi_{1}^{\l_{(r)}^a W^{[\s_r]}_a} \circ  \Phi_{1}^{\l_{(r-1)}^a W^{[ \s_{r -1} + \s_r]}_a} \circ 
 \ldots \ \circ \Phi_{1}^{\l_{(1)}^a W^{[\s_r + \s_{r-1} + \ldots \s_1]}_a} \right) \circ
  \\
 \circ \Phi^\bT_{\s_r+ \ldots + \s_1+ \s_0}  (\overline x) \ .
 \end{multline} 
Since $\s_r+ \ldots + \s_1+ \s_0 = T$,   \eqref{compos1} follows. The last claim is an immediate consequence of \eqref{compos1} and the fact that $\Phi^\bT_t(0) = 0$ for all $t \in \bR$.
\end{pf}
From Lemma \ref{lemma51} if follows that if $\overline x_o = (0,0, 0)$,  a point  $\overline y $ belongs to $\Attain^{(T)}_{\overline x_o}$ for some $T> 0$ if and only if it has the form \eqref{compos2}
for a finite set of   constants $\l_{(k)}  =  c_{k} - c_{k-1}$, $1 \leq k \leq r$ (which  correspond to  constant controls   $c_k \in \cK  \subset  \bR^m$) and a finite sequence of   real  numbers  $\t_k$  satisfying the  inequalities $T >  \t_1 >  \t_2 > \ldots >  \t_r > 0 $. \par
\medskip
\subsection{Our second proof of the sufficiency part of the Kalman Theorem} \label{section6.3}
Consider the  constant  vector fields on $\cM $  defined recursively by   
\beq \label{518}
\begin{split}
&W^{(0)}_1 \= W_1 =  \frac{\p}{\p u^1} \ ,\quad  W^{(0)}_2 \= W_2 = \frac{\p}{\p u^2}\ ,\qquad  \ldots\ , \qquad  W_m^{(0)} \= W_m = \frac{\p}{\p u^m}\ ,\\
&W_1^{(1)} \= [\bT, W_1] = B_1^i \frac{\p}{\p q^i}\ ,\quad W_2^{(1)} \= [\bT, W_2] = B_2^i \frac{\p}{\p q^i}\ , \ \ldots\ \\
& \hskip 7 cm \ldots \ \ W^{(1)}_m \=   [\bT, W_m] =  B_m^i \frac{\p}{\p q^i}\ ,\\
&W_1^{(2)} \= [\bT, W^{(1)}_1] = A_j^i B_1^j \frac{\p}{\p q^i}\ ,\quad W_2^{(2)} \= [\bT, W^{(1)}_2] = A_j^i B_2^j \frac{\p}{\p q^i}\ , \ \ldots\\
& \hskip 7 cm \ldots \ ,\ W^{(2)}_m \=   [\bT, W^{(1)}_m] =  A^i_j B_m^j \frac{\p}{\p q^i}\ ,\\
& \hskip 6 cm \vdots \\
&W_1^{(\ell)} \= [\bT, W^{(\ell-1)}_1] = A_{j_1}^{i} A_{j_2}^{j_1} \ldots A^{j_{\ell -2}}_{j_{\ell-1}} B_1^{j_{\ell-1}}\frac{\p}{\p q^i}\ , \ \ldots\\
& \hskip 4 cm \ldots \ ,\ W^{(\ell)}_m \=   [\bT, W^{(\ell-1)}_m] =   A_{j_1}^{i} A_{j_2}^{j_1} \ldots A^{j_{\ell -2}}_{j_{\ell-1}} B_m^{j_{\ell -1}} \frac{\p}{\p q^i}\ ,\\
& \hskip 6 cm \vdots
\end{split}
\eeq
Note that the $i-$th component of each vector field $W^{(\ell)}_a$, $1 \leq a \leq m$,  is always equal to  the $i-$th component of the vector $\underset{(\ell-1)-\text{times}}{\underbrace{A{\cdot} \ldots {\cdot} A}}{\cdot}B_a$.\par
\smallskip
Moreover, for any integer  $\ell \geq 0$, let us denote by $n_\ell$ the dimension of the vector space spanned by the $m{\cdot} (\ell+1)$  vector fields $W_a^{(k)}$, $1 \leq a \leq m$, $0 \leq k \leq \ell$
at a point $x_o \in \cM$:
$$n_\ell = \dim_\bR \langle W^{(0)}_{a_0}|_{x_o}, W_{a_1}^{(1)}|_{x_o}, \ldots, W_{a_\ell}^{(\ell)}|_{x_o}\ ,\  1 \leq a_k\leq m\ \rangle$$
(since all  considered  vector fields are constant,   $n_\ell$ is independent of $x_o$). Clearly the sequence  of integers $n_0, n_1, n_2, \ldots$ is non-decreasing  and  each of them is  less than or equal to $m + n = \dim \cK + \dim \cQ$.  We denote by   $\ell_{\text{max}}$   the smallest integer  for which  $n_{\ell_{\text{max}}}$ is maximal.  One can immediately realise that (see \eqref{tre})
\beq \label{importante} n_{\ell_{\max}} = m + \dim \Kalman^{(A, B)}\eeq
and that  {\it $n_{\ell_{\max}} = m + n$ if and only if $ \dim \Kalman^{(A, B)} = n$}.\par
\medskip
Let us now fix an ordered tuple 
$\cB = (E_1, \ldots, E_{n_{\ell_{\max}}})$,
 in which each vector field  $E_A$ is a vector belonging to  the set  $\{W_a^{(\ell)}\}_{\smallmatrix 1 \leq a \leq m\\ 0 \leq \ell \leq \ell_{\max} \endsmallmatrix}$, and such that they constitute  a basis 
for   the following  vector space    at  $x_o \in \cM$ (and thus at any other point):  
\beq  \langle W^{(0)}_{a_0}|_{x_o}, W_{a_1}^{(1)}|_{x_o}, \ldots, W_{a_{\ell_{\max}}}^{(\ell_{\max})}|_{x_o}\ ,\  1 \leq a_j\leq m\ \rangle\ .\eeq
Since  $n_{\ell_{{\max}}}$ is  the maximal dimension, each vector field $W^{(\ell)}_a$, $\ell \geq  \ell_{\max} +1$,  admits a unique expansion with constant coefficients in terms of the vector fields $E_A$, $1 \leq A \leq n_{\ell_{{\max}}}$.  Since each $E_A$ has the form $W_b^{(\ell)}$ for some $1 \leq b \leq m$ and $0  \leq \ell \leq \ell_{\max}$, we have that all   vector fields $W^{(r)}_a$, $r \geq  \ell_{\max} + 1$, admits a unique expansion of the form 
\beq \label{exp1}  W^{(r)}_a = \sum_{\smallmatrix 0 \leq \ell  \leq \ell_{\max}\\ 
1 \leq b \leq m\endsmallmatrix} \lambda^{r|b}_{a|\ell} W^{(\ell)}_b\ ,\qquad r \geq \ell_{{\max}} + 1\ , \eeq
where  each  coefficient $ \lambda^{r|b}_{a|\ell} $ is either  $0$ or equal to the component  of   $W^{(r)}_a$ in the direction of $ W^{(\ell)}_b$ in case  $ W^{(\ell)}_b$  is one of the chosen basis  $\cB = (E_A)$. 
\begin{lem} \label{lemma52} For any $1 \leq a \leq m$ and $s \in \bR$, the vector field $W_a^{[s]} = \Phi^\bT_{s*}(W_a)$  has a unique expansion of the form 
\beq \label{expansion}W_a^{[s]} = \sum_{\ell = 0}^{\ell_{{\max}} }\left((-1)^\ell \frac{s^\ell}{\ell!} \d_a^b +  s^{\ell_{{\max}} + 1 }D_{a|\ell}^b(s) \right)W_b^{(\ell)}\ ,\eeq
where $D_{a|\ell}^b(s) $ is the  sum of the converging series
\beq   
D_{a|\ell}^b(s) \=  (-1)^{\ell_{\max} + 1 } \sum_{\ell = 0}^\infty (-1)^{\ell}  \frac{s^{\ell }}{(\ell + \ell_{{\max}} + 1)!} \lambda_{a|\ell}^{\ell + \ell_{{\max}}+ 1|b}
\eeq 
and $\d_a^b$ is the Kronecker delta. 
In particular,
\begin{itemize}[leftmargin = 18pt]
\item[--] for each  $s\in \bR$, the  vector field $W_a^{[s]}$ is a  finite linear combination of the vector fields $W_b^{(\ell)}$, $1 \leq b \leq m$, $0 \leq \ell \leq \ell_{\max}$,  and
\item[--] for each $s ,s' \in \bR$,  the   diffeomorphism $\Phi^{W^{[s]}_a}_{s'}$ maps the hypersurfaces $\{ t = \text{const.}\}$  into themselves.
\end{itemize} 
\end{lem}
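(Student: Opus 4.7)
The plan is to compute $W_a^{[s]}$ in closed form by first writing down the flow of $\bT$ explicitly and then pushing forward $W_a = \p/\p u^a$. Since
$$\bT = \frac{\p}{\p t} - (A^i_j q^j + B^i_a u^a)\frac{\p}{\p q^i}$$
is affine in $(q,u)$ and has no $\p/\p u$-component, integration is immediate: starting from $(t_0, q_0, u_0) \in \cM$ one obtains
$$\Phi^\bT_s(t_0, q_0, u_0) = \big(t_0 + s,\ e^{-As}q_0 - M(s)Bu_0,\ u_0\big), \qquad M(s) := \int_0^s e^{-A\sigma}\,d\sigma.$$
Computing the Jacobian of this map at a point and applying it to $\p/\p u^a$ gives
$$W_a^{[s]} = \frac{\p}{\p u^a} - (M(s)B_a)^i\frac{\p}{\p q^i}.$$
Expanding $e^{-A\sigma}$ as a power series and integrating term-by-term, and recalling from \eqref{518} that $W_a^{(0)} = \p/\p u^a$ while $W_a^{(\ell)} = (A^{\ell-1}B_a)^i\p/\p q^i$ for $\ell \geq 1$, one arrives at the Taylor identity
$$W_a^{[s]} \;=\; \sum_{\ell=0}^\infty (-1)^\ell \frac{s^\ell}{\ell!}\, W_a^{(\ell)},$$
a series which converges absolutely for every $s\in\bR$ since the components of $W_a^{(\ell)}$ are bounded by $\|A\|^{\ell-1}\|B\|$ for $\ell\geq 1$.

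The next step is to split this series at $\ell = \ell_{\max}$. For the tail with $\ell \geq \ell_{\max}+1$ I substitute the unique expansion \eqref{exp1}, rewriting each $W_a^{(\ell)}$ as a finite combination $\sum_{k,b}\lambda_{a|k}^{\ell|b} W_b^{(k)}$. After re-indexing $\ell = r + \ell_{\max} + 1$ and interchanging the two (now double) summations — which is legitimate by absolute convergence, itself a consequence of the bound $|\lambda_{a|k}^{\ell|b}| \leq C\|A\|^{\ell-1}\|B\|$ obtained by expressing norms against the fixed basis $\cB$ — the tail takes the form
$$\sum_{0 \leq k \leq \ell_{\max}}\sum_{1\leq b \leq m} s^{\ell_{\max}+1}\,D_{a|k}^b(s)\,W_b^{(k)}$$
with $D_{a|k}^b(s)$ exactly the convergent series in the statement. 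Expressing the remaining head $\sum_{\ell=0}^{\ell_{\max}}(-1)^\ell\tfrac{s^\ell}{\ell!}W_a^{(\ell)}$ trivially as $\sum_{\ell,b}(-1)^\ell\tfrac{s^\ell}{\ell!}\delta_a^b W_b^{(\ell)}$ and combining produces precisely \eqref{expansion}.

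The two concluding bullet points are then immediate from the formula \eqref{expansion}: $W_a^{[s]}$ is a finite linear combination of the $W_b^{(\ell)}$ for $0\leq\ell\leq\ell_{\max}$, $1\leq b\leq m$; and since each such $W_b^{(\ell)}$ is a constant vector field with vanishing $\p/\p t$-component, the same holds for every linear combination $\lambda^a W_a^{[s]}$, so its flow preserves the level sets $\{t = \text{const.}\}$. The only slightly delicate step in the whole argument is the interchange of summations when rewriting the tail; everything else is mechanical once the closed-form flow of $\bT$ is in hand.
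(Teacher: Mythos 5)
Your proof is correct, but it reaches the central identity $W_a^{[s]}=\sum_{\ell\geq 0}(-1)^\ell\frac{s^\ell}{\ell!}\,W_a^{(\ell)}$ by a genuinely different route than the paper. The paper never integrates the flow of $\bT$: it characterises $s\mapsto W_a^{[s]}$ as the unique solution of the Lie-derivative ODE $\frac{d}{ds}W_a^{[s]}=-[\bT,W_a^{[s]}]$ with $W_a^{[0]}=W_a$, builds the candidate power series from the recursion \eqref{518}, checks its uniform convergence on compact intervals, verifies that it solves the same ODE, and concludes by uniqueness. You instead exploit the affine structure of $\bT$ to write its flow in closed form, $\Phi^\bT_s(t_0,q_0,u_0)=\big(t_0+s,\,e^{-As}q_0-M(s)Bu_0,\,u_0\big)$ with $M(s)=\int_0^s e^{-A\sigma}\,d\sigma$, and you obtain $W_a^{[s]}=\p/\p u^a-(M(s)B_a)^i\,\p/\p q^i$ by a direct Jacobian computation, the series then falling out of the expansion of the matrix exponential; all of these computations are accurate. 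Your route is more elementary and in some ways more transparent: convergence is automatic (it is the Taylor series of an entire matrix function), and the vanishing of the $\p/\p t$-component --- hence the preservation of the hypersurfaces $\{t=\text{const.}\}$ --- is visible in the closed form rather than deduced from \eqref{expansion}. The trade-off is that your argument is wholly tied to the linearity of the system, whereas the paper's ODE/uniqueness scheme is exactly the pattern that survives the generalisation to nonlinear vector fields pursued in \cite{GSZ}. From the series onward the two arguments coincide: substituting \eqref{exp1} into the tail and interchanging the summations yields \eqref{expansion}. One small remark: since for each fixed $r$ the inner sum over $(b,\ell)$ is finite, the interchange is merely linearity together with convergence of each series $D^b_{a|\ell}(s)$, which your bound $|\lambda^{r|b}_{a|\ell}|\leq C\|A\|^{r-1}\|B\|$ (coordinate functionals with respect to the fixed basis $\cB$ are bounded) indeed secures --- so the one step you flag as delicate is even easier than you suggest.
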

\begin{pf}   We recall that  for any $s \in \bR$ and any vector field $X$ on $\cM$, 
\beq \frac{d}{d s}  \Phi^{\bT}_{s_*}(X)\bigg|_s =  \frac{d}{d h} \Phi^{\bT}_{h_*}(X^{[s]})\bigg|_{h = 0} =  \frac{d}{d h} \Phi^{-\bT}_{h}{}^*(X^{[s]})\bigg|_{h = 0}   =  - [ \bT , X^{[s]}]  \ .\eeq
Due to this, for any   $a \in \{1, \ldots, m\}$, the one-parameter family of vector fields $s \mapsto W_a^{[s]} $,  $s \in \bR$, is the unique solution to the differential problem
\beq \label{equ} \frac{d  W_a^{[s]} }{d s}\bigg|_s = - [\bT, W_a^{[s]}] \ ,\qquad   W_a^{[s = 0]} = W_a\ .\eeq
On the other hand,  if   $W_a^{(\ell)i}$,  $W_a^{(\ell)b}$ denote  the  components of  the vector field $W_a^{(\ell)} = W_a^{(\ell)i} \frac{\p}{\p q^i} + W_a^{(\ell)b} \frac{\p}{\p u^b}$, using the iterative definition   \eqref{518} of such vector field, one can directly check that the power series 
$$ X^i(s)  =  \sum_{\ell = 0}^\infty (-1)^\ell \frac{s^\ell}{\ell!} W_a^{(\ell)i} \ ,\qquad  X^b(s)  \= \sum_{\ell = 0}^\infty (-1)^\ell \frac{s^\ell}{\ell!} W_a^{(\ell)b}$$
 uniformly converges on any  closed   interval  $[s_1, s_2] \subset \bR$. This implies that  the vector field
 $$X(s) = X^i(s) \frac{\p}{\p q^i} + X^b(s) \frac{\p}{\p u^b} $$ is well defined and real analytic for  any $s \in \bR$,  and it is such that 
 $X(0) = W_a$ and  $\frac{d   X(s) }{d s}\big|_s =   - [\bT, X(s)]$.
From  uniqueness  of the solution to \eqref{equ},  we conclude that 
$W^{[s]}_a =  X(s) =  \sum_{\ell = 0}^\infty (-1)^\ell \frac{s^\ell}{\ell!} W_a^{(\ell)}$.  We now recall that each vector field $W_a^{(\ell)}$, $\ell \geq \ell_{{\max}} + 1$, $1 \leq a \leq m$, admits a unique expansion  \eqref{exp1}. Replacing  such expansion in all  terms corresponding to  the powers $s^\ell$, $\ell   \geq \ell_{{\max}} + 1$, we get \eqref{expansion}.  The last two claims are  consequences of  \eqref{expansion} and the fact that each  vector field $W_b^{(\ell)}$ has  trivial component along  $\frac{\p}{\p t}$.
\end{pf}
 For any $\ve >0$ and any  ordered pair of integers $(a, \ell)$, $1 \leq a \leq m$, $0 \leq \ell \leq \ell_{{\max}}$,   let  
 $$\t_{a, \ell} \=   \frac{a + \ell m }{m \cdot (\ell_{{\max}}+1)}\  ,\qquad  \bW_a^{(\ell)} \= \ell! W_{a}^{[\ve \t_{a,\ell}]}\ .$$
Note that  each real number  $\ve \t_{a, \ell}$ belongs to  the interval $ \left[\frac{1}{m (\ell_{{\max}} + 1)}  \ve, \ve \right]$ and the latter is  a subinterval of $(0, T)$ for a prescribed  value of $T$, provided that  $\ve$ is sufficiently small.
  Imposing  the lexicographic order for the  set of  pairs $(a, \ell)$ and denoting each such pair by a single index $ 1 \leq A \leq m \cdot (\ell_{{\max}}+1)$,  we may consider  the
tuples of vector fields 
$$\{ \bW_A\}_{1 \leq A \leq m \ell_{{\max}}} = \{\bW_a^{(\ell)}\}_{\smallmatrix 1 \leq a \leq m\\ 0 \leq \ell \leq \ell_{{\max}}
\endsmallmatrix}\ ,\qquad  \{W_B\}_{1 \leq B \leq m \ell_{{\max}}}  = \{ W_a^{(\ell)}\}_{1 \leq a \leq m, 0 \leq \ell \leq \ell_{{\max}}}$$
 and the squared   matrix  $\cA= \big(\cA_A^B\big)$, with entries    given by  the components of the vector fields $\bW_A$ in terms of the vector fields $W_B$ according to  \eqref{expansion}. Notice that $\cA$ has the  form 
$\cA =\wh  \cA + O(\ve^{\ell_{{\max}}+1})$, where
\begin{multline}  \wh \cA= \left(\begin{array} 
{cccc} 
\wh \cA_1& 0 & \dots & 0\\
0 & \wh \cA_2 & \dots & 0\\
\vdots & \vdots & \ddots & 0\\
0 & 0 & \dots & \wh \cA_m
\end{array}  \right)\ \\
\text{with}\ \wh \cA_a \= \left(\begin{array} {ccccc } 1 & \ve \t_{a,0} & \ve^2  \t^2_{a,0} &  \ldots & \ve^{\ell_{{\max}}} \t_{a, 0}^{\ell_{{\max}}}\\
 1 & \ve \t_{a,1} &  \ve^2 \t^2_{a,1} & \ldots & \ve^{\ell_{{\max}}} \t_{a, 1}^{\ell_{{\max}}}\\
 \vdots & \vdots & \vdots & \ddots & \vdots\\
  1 & \ve  \t_{a,\ell_{{\max}}} & \ve^2 \t^2_{a,\ell_{{\max}}} & \ldots & \ve^{\ell_{{\max}}} \t_{a,\ell_{{\max}}}^{\ell_{{\max}}}\end{array}
  \right)\ .
\end{multline}
 and  $ O(\ve^{\ell_{{\max}}+1})$ stands for a squared matrix whose  entries  are infinitesimal of order $\ell_{{\max}}+1$ with respect to $\ve$.  
\begin{lem} \label{lemma53} For any  $T> 0$, there exists  $\ve \in \left(0, \frac{T}{2}\right)$ such that the  matrix $\cA$ is invertible. 
\end{lem}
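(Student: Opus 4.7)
The plan is to show that $\det\cA \neq 0$ for every sufficiently small $\ve > 0$ by extracting the Vandermonde structure hidden inside the leading part $\wh\cA$ and then absorbing the $O(\ve^{\ell_{\max}+1})$ remainder through a column rescaling. The key observation is that $\wh\cA$ is block-diagonal with Vandermonde blocks, so its zeroes of determinant are entirely due to the explicit $\ve$-scaling of its columns; factoring this scaling out on the right transforms the question into a continuity/perturbation problem about a genuinely non-degenerate, $\ve$-independent matrix.

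First, I would write each diagonal block as $\wh\cA_a = V_a \cdot D$, where $D = \mathrm{diag}(1, \ve, \ve^2, \ldots, \ve^{\ell_{\max}})$ and $V_a$ is the constant $(\ell_{\max}+1) \times (\ell_{\max}+1)$ Vandermonde matrix with $(V_a)_{\ell, j} = \t_{a,\ell}^{\,j}$. Since $\t_{a,\ell} = \frac{a + \ell m}{m(\ell_{\max}+1)}$ is strictly increasing in $\ell$ for each fixed $a$, the real numbers $\t_{a,0}, \ldots, \t_{a, \ell_{\max}}$ are pairwise distinct and
\[
\det V_a \;=\; \prod_{0 \le \ell < \ell' \le \ell_{\max}} (\t_{a,\ell'} - \t_{a,\ell}) \;\neq\; 0.
\]
Setting $V = \mathrm{diag}(V_1, \ldots, V_m)$ and $D'$ equal to the block-diagonal matrix with $m$ copies of $D$, one obtains $\wh\cA = V \cdot D'$, with $\det V \neq 0$ and independent of $\ve$.

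Next, write $\cA = \wh\cA + R$, where $R$ collects the contributions of the terms $s^{\ell_{\max}+1} D_{a|\ell'}^{b}(s)$ with $s = \ve \t_{a,\ell}$ supplied by Lemma \ref{lemma52}, including the entire off-diagonal $(a,b)$-blocks (with $a \neq b$), where the Kronecker-$\d$ part vanishes. Since each $D_{a|\ell'}^{b}$ is real analytic, hence bounded on a neighbourhood of $0$, and since $\t_{a,\ell} \leq 1$, every entry of $R$ is of order $O(\ve^{\ell_{\max}+1})$. Factoring $\cA = (V + R \cdot D'^{-1}) \cdot D'$, one notes that the $j$-th column of $D'^{-1}$ (with $0 \le j \le \ell_{\max}$ inside each block) is scaled by $\ve^{-j}$, so the entries of $R \cdot D'^{-1}$ are of order $O(\ve^{\ell_{\max}+1 - j}) = O(\ve)$. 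Therefore $V + R \cdot D'^{-1} \to V$ as $\ve \to 0^{+}$, and by continuity of the determinant there exists $\ve_0 > 0$ with $\det(V + R \cdot D'^{-1}) \neq 0$ for every $\ve \in (0, \ve_0)$. Any $\ve \in \bigl(0, \min\{\ve_0,\, T/2\}\bigr)$ then satisfies $\det \cA = \det(V + R \cdot D'^{-1}) \cdot \det D' \neq 0$, yielding the invertibility claim.

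The main obstacle is that both $\det \wh\cA$ and every entry of $R$ tend to $0$ as $\ve \to 0$, so one cannot directly apply a continuity argument to $\cA$ itself; the essential trick is the column rescaling by $D'^{-1}$, which converts the problem into a perturbation of the $\ve$-independent non-singular matrix $V$. The only modest technical point to verify carefully is that $R$ really is of order $O(\ve^{\ell_{\max}+1})$ \emph{entrywise} (both on and off the diagonal blocks), but this follows at once from the explicit form of the expansion \eqref{expansion}, since every contribution not already captured by $\wh\cA$ carries an explicit factor $(\ve\t_{a,\ell})^{\ell_{\max}+1}$ multiplied by a bounded analytic function of $\ve\t_{a,\ell}$.
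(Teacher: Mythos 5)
Your proof is correct and follows essentially the same route as the paper: factoring the powers of $\ve$ out of the columns of $\cA$ (your right-multiplication by $D'^{-1}$ is exactly the paper's column factorisation), reducing to the constant block-diagonal Vandermonde matrix with distinct nodes $\t_{a,0}<\cdots<\t_{a,\ell_{\max}}$, and concluding by continuity of the determinant for small $\ve$. Your write-up is in fact slightly more careful than the paper's, since you verify explicitly that the remainder $R\cdot D'^{-1}$ is $O(\ve)$ entrywise, a point the paper passes over with its $O(\ve)$ notation.
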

\begin{pf}  Consider the matrix
  \beq    \cB = \left(\begin{array} 
{cccc} 
  \cB_1& 0 & \dots & 0\\
0 &   \cB_2  & \dots & 0\\
\vdots & \vdots & \ddots & 0\\
0 & 0 & \dots &   \cB_m
\end{array}  \right)\  ,\ \ 
  \cB_a\= \left(\begin{array} {ccccc } 1 &  \t_{a,0} &   \t^2_{a,0} &  \ldots & \t_{a, 0}^{\ell_{{\max}}}\\
 1 &  \t_{a,1} &   \t^2_{a,1} & \ldots & \t_{a, 1}^{\ell_{{\max}}}\\
 \vdots & \vdots & \vdots & \ddots & \vdots\\
  1 &   \t_{a,\ell_{{\max}}} &  \t^2_{a,\ell_{{\max}}} & \ldots &  \t_{a,\ell_{{\max}}}^{\ell_{{\max}}}\end{array}
  \right)\ .
\eeq
Factoring out an appropriate   power $\ve^r$, $ 0 \leq r \leq \ell_{{\max}}$,  from the terms of  each column of $\cA$, we get that  
$$\det \cA =\left(\prod_{0 \leq r \leq  \ell_{\max}} \ve^r\right)^m  \det\left( \cB + O(\ve)\right)\ ,$$
where $O(\ve)$ denotes  a matrix whose entries are infinitesimal of the same order of $\ve$.
Therefore,  by continuity  and the well-known  formula for the  determinants of Vandermonde matrices (see e.g. \cite{HJ}),  
$$\det \cA(\ve) =  \left(\prod_{0 \leq r \leq  \ell_{\max}} \ve^r\right)^m \prod_{1 \leq a \leq m} \left(\prod_{1\leq \ell <  \ell' \leq \ell_{{\max}} }
(\t_{a, \ell'} - \t_{a, \ell} )+ O(\ve)\right)$$
where now  $O(\ve)$ stands for   a {\it function} which is an infinitesimal of the same order of $\ve$.  Since each  factor  $\prod_{1\leq \ell <  \ell' \leq \ell_{{\max}} }
(\t_{a, \ell'} - \t_{a, \ell} )+ O(\ve)$ tends to a non-zero value for $\ve $ tending to $0$, the lemma follows. 
\end{pf}
\begin{theo} \label{theprop1}  Assume that $\cK \subset \bR^m$ is  an open set  and that  $ \cC_{\overline q}^{(A,B)p.c.} $  contains a point $q$  that   reaches $\overline q$ in a time $T> 0$  through a solution  $q^{(q, u_o)}$   with $u_o(t)$  piecewise constant. If $\Kalman^{(A, B)} = n$,  then $  \cC_{\overline q}^{(A,B)p.c.} $ contains a neighbourhood of the point $ q$.  \par
\end{theo}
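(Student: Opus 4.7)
The plan is to reduce the problem to an application of the inverse function theorem within the extended space-time framework of §\ref{section6.2}. Time-reversing the given piecewise constant control $u_o$ and invoking \eqref{the53} produces a stepped graph $\eta_o$ in $\cM$ that starts at some $\overline x_o = (0, \overline q, \overline w_o)$ with $\overline w_o \in \cK$ and ends at $\wh y_o = (T_o, q, \wh w)$ with $\wh w \in \cK$. By \eqref{the53} it suffices to exhibit stepped graphs starting from $\overline x_o$ whose endpoints, projected by $\pi^{\cQ}$, sweep out an open neighbourhood of $q$.

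Fix $\ve \in (0, \sigma_r)$ small, with $\sigma_r$ the duration of the last odd arc of $\eta_o$, and set $N = m(\ell_{\max}+1)$. Modify $\eta_o$ by replacing the final portion of duration $\ve$ of its last odd arc by a sequence of $N$ short odd arcs, the $k$th having constant control $\wh w + \nu_k \in \cK$ (for small $\nu_k \in \bR^m$) and duration $\ve \delta_k$ chosen so that the resulting time-to-go values are $\tau'_{r+k} = \ve\, \tau_{a_0(k), \ell_0(k)}$, where $\{(a_0(k), \ell_0(k))\}_{k=1}^N$ enumerates in decreasing $\tau$-order the $N$ pairs of Lemma~\ref{lemma53}. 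The total duration is preserved, and at $\nu = 0$ every new control value equals $\wh w$, so the modified stepped graph reduces to $\eta_o$ and its endpoint satisfies $\overline y(0) = \wh y_o$. Applying Lemma~\ref{lemma51} and absorbing the unperturbed ``prefix'' into $\wh y_o$ one obtains
\[
\overline y(\nu) = \Phi_{1}^{\lambda^a_{(r+N)}(\nu) W^{[\tau'_{r+N}]}_a} \circ \cdots \circ \Phi_{1}^{\lambda^a_{(r+1)}(\nu) W^{[\tau'_{r+1}]}_a}(\wh y_o),
\]
with $\lambda_{(r+1)}(\nu) = \nu_1$ and $\lambda_{(r+k)}(\nu) = \nu_k - \nu_{k-1}$ for $k \geq 2$.

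Differentiating at $\nu = 0$ (where every flow reduces to the identity, so the chain rule simplifies to a sum) yields $\tfrac{\partial \overline y}{\partial \nu^a_N}\bigl|_0 = W_a^{[\tau'_{r+N}]}\bigl|_{\wh y_o}$ and $\tfrac{\partial \overline y}{\partial \nu^a_k}\bigl|_0 = W_a^{[\tau'_{r+k}]}\bigl|_{\wh y_o} - W_a^{[\tau'_{r+k+1}]}\bigl|_{\wh y_o}$ for $1 \leq k < N$. Their joint span equals $\bigl\langle W_a^{[\ve\tau_{a_0, \ell_0}]}\bigl|_{\wh y_o} : 1 \leq a \leq m,\, (a_0, \ell_0)\bigr\rangle$ and hence contains every $\bW_A|_{\wh y_o}$ of Lemma~\ref{lemma53}. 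Choosing $\ve$ small enough so that Lemma~\ref{lemma53} provides the invertibility of $\cA$, this span coincides with $\langle W^{(\ell)}_b|_{\wh y_o}\rangle$; by \eqref{importante} and the Kalman condition $\dim \Kalman^{(A,B)} = n$, this last span has dimension $m + n$, i.e.\ it is the whole tangent space to the slice $\{t = T_o\}$ at $\wh y_o$. The inverse function theorem then guarantees that $\overline y$ is an open map near $\nu = 0$ onto a neighbourhood of $\wh y_o$ in that slice, and projecting by $\pi^{\cQ}$ produces an open neighbourhood of $q = \pi^{\cQ}(\wh y_o)$ contained in $\cC^{(A,B)\text{p.c.}}_{\overline q}$, as required.

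The main technical point is verifying that the image of $d\overline y|_0$ truly spans the entire horizontal tangent space. The telescoping structure $\lambda_{(r+k)}(\nu) = \nu_k - \nu_{k-1}$ at first threatens a loss of one dimension, but the untelescoped contribution from $\nu_N$ recovers $W_a^{[\tau'_{r+N}]}|_{\wh y_o}$ and a backward induction then recovers all $W_a^{[\tau'_{r+k}]}|_{\wh y_o}$; the remaining ingredient is the interplay between Lemma~\ref{lemma52}'s Taylor expansion, the Vandermonde-type invertibility of $\cA$ from Lemma~\ref{lemma53}, and the dimensional identity \eqref{importante}, which together deliver the full $(m+n)$-dimensional image. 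The deliberate design of the perturbation so that $\overline y(0) = \wh y_o$ exactly (rather than merely approximately) is what avoids any rescaling-in-$\ve$ issues and lets a single application of the inverse function theorem conclude the argument.
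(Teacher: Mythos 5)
Your proposal is correct and takes essentially the same route as the paper's proof: both perturb the control in a terminal $\ve$-window of the trajectory, use Lemma~\ref{lemma51} to express the endpoint as a composition of flows of the fields $W_a^{[\ve\t_{a,\ell}]}$, deduce that the differential has full rank $m+n$ on the slice $\{t=T\}$ from Lemmas~\ref{lemma52} and \ref{lemma53} together with \eqref{importante}, and conclude via the inverse function theorem and the projection $\pi^{\cQ}$, invoking \eqref{the53}. The only differences are cosmetic: the paper first reduces to a constant control and uses $N=m(\ell_{\max}+1)$ flow-time parameters whose Jacobian columns are exactly the $\bW_A$, whereas you keep the general piecewise constant control and employ a redundant $mN$-dimensional parameterisation by control values, recovering the same span through your telescoping induction.
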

\begin{pf}   Consider an initial   subinterval $[0, T']$ of  $[0,T[$, on which the control $u_o(t)$ is constant and let $\overline q' \= q(T')$. Since $\overline q'$ is in $  \cC_{\overline q}^{(A,B)p.c.} $,  each  point  which is controllable to $\overline q'$ is  automatically a point of  $  \cC_{\overline q}^{(A,B)p.c.} $. Hence, being $q $ controllable to $\overline q'$,  if we show that $  \cC_{\overline q'}^{(A,B)p.c.} $ contains a neighbourhood of $q$, then we immediately get that also $  \cC_{\overline q}^{(A,B)p.c.} $ contains a neighbourhood of $q$. This fact shows that there is no loss of generality if we prove the theorem just under the stronger assumption that $u_o(t)$ is  constant. 
\par
Let $q$,  $ \cC_{\overline q}^{(A,B)p.c.}$ and $u_o(t)$ as in the hypothesis and assume that  $u_o(t)$ is constant.  
Pick  $\ve \in (0, T)$  and consider the   map 
\begin{multline} f^{(\ve)}:\bR^{m {\cdot} (\ell_{{\max}} + 1)} \longrightarrow\cM\\
f^{(\ve)}(s_{1, 0} , s_{2,0},  \ldots, s_{a, \ell}, \ldots, s_{m, \ell_{{\max}}})
\=  \bigg(  \Phi_{ s_{1,0}}^{W_1^{[\ve \t_{1,0}])}} \circ   \Phi_{s_{2, 0}}^{W_2^{[\ve \t_{2,0}]}}\circ \ldots 
 \circ  \Phi_{s_{a, \ell}}^{W_a^{[\ve \t_{a,\ell}]}} \circ \ldots\\
 \ldots \circ  \Phi_{s_{m, \ell_{{\max}}}}^{W_{m-1}^{[\ve\t_{m-1, \ell_{{\max}}}]}} \circ  \Phi_{ s_{m, \ell_{{\max}}}}^{W_m^{[\ve \t_{m, \ell_{{\max}}}]}} \circ  \Phi_T^\bT \bigg)(0, \overline q, c)\ .
\end{multline}
By construction and Lemma \ref{lemma52}, the following properties hold: 
\begin{itemize}[leftmargin = 18pt]
\item[(i)]  $f^{(\ve)}(0) =  \Phi_{T }^\bT(0, \overline q, c)  = (T, q, c)$ and for any $s \in \bR^{m {\cdot} (\ell_{{\max}} + 1)}$, the point $f^{(\ve)}(s)$ is in the hypersurface $\{ t = T\} \subset \cM$; 
\item[(ii)]  By  Lemma \ref{lemma51},  each   point  of the form $\overline y  = f^{(\ve)}(s) $ belongs to  $\Attain^{(T)}_{\overline x_o}$, $x_o = (0, \overline q, c)$; 
\item[(iii)] The columns of the Jacobian  $Jf^{(\ve)}\big|_{0}$ are  the components of the vector fields 
\beq \frac{d  f^{(\ve)}(0 , 0,  \ldots, 0, s_{a, \ell} = t, 0, \ldots, 0)}{d t}\bigg|_{t = 0} = \ell! W_{a}^{[\ve \t_{a,\ell}]} =  \bW_a^{(\ell)}\ \ . \eeq
\end{itemize}
By iii) and Lemma \ref{lemma53},  for any sufficiently small $\ve> 0$, the  rank of  $Jf^{(\ve)}\big|_{0}$ is equal to the dimension of the vector space spanned by the vector fields $W_a^{(\ell)}$, i.e. $m + \dim \Kalman^{(A, B)} $.  Such a rank  is equal to $m + n = \dim \cK + \dim \cQ$ if and only if $ \dim \Kalman^{(A, B)} = n$. If this is the case, by the Inverse Function Theorem  the image of  $f^{(\ve)}$ contains a neighbourhood of the point $y_o = (T, q, c)$ in the hypersurface 
 $\{ t = T\} \subset \cM = \bR \times \cQ \times \cK$. Since the map $\pi^\cQ: \cM \to \cQ$ has maximal rank at all points of    $\{ t = T\} $, such a neighbourhood of $y_o$  projects onto an open neighbourhood of $q$ and,  by \eqref{the53},  such an open set  is a neighbourhood of $q$ which is entirely included in $\cO_{\overline q }^{(\bA,\bB)p.c.} = \cC_{\overline q}^{(A,B)p.c.} $.  
\end{pf}

Our second proof of the sufficiency part of the Kalman Theorem, described at the beginning of this section,  is now a trivial consequence of Theorem \ref{theprop1}:  It suffices to observe  that such a theorem  applies to  the case in which $q = 0_{\bR^n}$  and $\bar q = 0_{\bR^n}$, because  the origin can be considered as  joined to itself  by means the trivial solution of \eqref{Thesystem-1}, determined by the constant control
  $u_o(t) \equiv 0$, which  is an admissible control by the hypothesis of Theorem  \ref{Kalman-criterion}. \\
\par
\section*{Declarations}
\noindent \textbf{Ethical Approval}: Not applicable.\\
\textbf{Funding}: Not apllicable.\\
\textbf{Acknowledgments}: F. Bagagiolo is a member of INDAM-Gruppo Nazionale per l'Analisi Matematica la Probabilit\`a e le loro Applicazioni; M. Zoppello is a member of INDAM-Gruppo Nazionale di Fisica Matematica.
F. Bagagiolo thanks Paolo Venturini for discussions on some parts of the subjects.

\end{document}